\documentclass[]{article}
\usepackage{amsmath}
\usepackage{amsfonts}
\usepackage{amssymb}
\usepackage{amsthm}
\usepackage{graphicx}
\usepackage{float}
\usepackage{enumerate}
\usepackage{comment}
\usepackage[utf8]{inputenc}

\usepackage[
backend=biber,
style=ieee,
sorting=nyt,
dashed=false,
]{biblatex}
\addbibresource{PAC_STOL_FULLY_NONLINEAR_final.bib}

\newtheorem{theorem}{Theorem}[section]
\newtheorem{lemma}[theorem]{Lemma}
\newtheorem{Remark}[theorem]{Remark}
\newtheorem{corollary}[theorem]{Corollary}
\newtheorem{Definition}[theorem]{Definition}
\newtheorem{Proposition}[theorem]{Proposition}
\title{On a class of fully nonlinear elliptic equation in dimension two}
\author{Filomena Pacella \thanks{pacella@mat.uniroma1.it} \: and \: David Stolnicki \thanks{david.stolnicki@uniroma1.it}}
\date{Dipartimento di Matematica, Università di Roma Sapienza, Italy}
\begin{document}

\maketitle

\noindent\textbf{Abstract:} We study existence and asymptotic behavior of radial positive solutions of some fully nonlinear equations involving Pucci's extremal operators in dimension two. In particular we prove the existence of a positive solution of a fully nonlinear version of the Liouville equation in the plane. Moreover for the $\mathcal{M}^- _{\lambda,\Lambda}$ operator, we show the existence of a critical exponent and give bounds for it. \\
\textbf{Keywords:} Fully nonlinear equations; asymptotic behavior; critical exponent. \\ 
 \textbf{MSC2020:} 35J60, 35B09, 34A34. 
\section{Introduction}

\par In this paper we study positive radial solutions for the fully nonlinear Lane Emden equation driven by Pucci's extremal operators: \\

\begin{equation} \label{genprob}
		-\mathcal{M}_{\lambda,\Lambda}^\pm (D^2 u) = u^p \qquad \text{ in } \Omega\\
\end{equation} \\

\noindent where $\Omega \subset \mathbb{R}^2$ is either the ball $B_R$ of radius $R >0$, centered at the origin or the whole plane $\mathbb{R}^2$. In the case of the ball we will impose the homogeneous Dirichlet boundary condition. \\
We recall that for a $\mathcal{C}^2$ function in $\mathbb{R}^N$, $N \geq 2,$ the Pucci's operators are defined by:

\begin{align*}
	\mathcal{M}_{\lambda,\Lambda}^+ (D^2 u) = \Lambda \sum_{\mu_i >0} \mu_i + \lambda \sum_{\mu_i <0} \mu_i \\
	\mathcal{M}_{\lambda,\Lambda}^- (D^2 u) = \lambda \sum_{\mu_i >0} \mu_i + \Lambda \sum_{\mu_i <0} \mu_i \\
\end{align*}  

\noindent where $0<\lambda\leq \Lambda$ are the ellipticity constants and $\mu_i=\mu_i(D^2 u)$ , $i=1,\ldots,N$ are the eigenvalues of the hessian matrix $D^2 u$. Associated to $\mathcal{M}_{\lambda,\Lambda}^\pm$ are dimension like numbers $\tilde{N}_{+}$ and $\tilde{N}_{-}$ defined as

\begin{equation}\label{Dimension}
	\tilde{N}_{+}= \frac{\lambda}{\Lambda}(N-1)+1 \qquad \text{ and } \tilde{N}_{-}= \frac{\Lambda}{\lambda}(N-1)+1
\end{equation}

These numbers allow to give estimates for the exponent $p$ for which existence or nonexistence of solutions of (\ref{genprob}) in $B_R$ or $\mathbb{R}^N$ holds when $N\geq 3$ and $\tilde{N}_+ > 2$, (note that $\tilde{N}_{-}$ is always larger than two if $N \geq 3$). 

Indeed a first result obtained in \cite{CL1} shows that if $ N \geq 3$ and $p \leq \frac{\tilde{N}_{\pm}}{\tilde{N}_{\pm} - 2}$ then no nontrivial positive viscosity supersolutions of (\ref{genprob}) exist in $\mathbb{R}^N$. Using this result, the existence of positive solutions in bounded domains, not necessarily radial, was proved in \cite{QS} for the same range of exponents. 

\par In the radial setting Felmer and Quaas in \cite{FQ} (see also \cite{FQ1}) provided, for $N\geq3$ , the existence of critical exponents $p_{+}^*$ for $\mathcal{M}_{\lambda,\Lambda}^+$ and $p_{-}^*$ for $\mathcal{M}_{\lambda,\Lambda}^-$ satisfying 

\begin{align}\label{ce}
	\max \left\{ \frac{\tilde{N}_+ }{\tilde{N}_+ - 2} , \frac{N+2}{N-2} \right\}  <\; &p_+^*  < \frac{\tilde{N}_+ + 2 }{\tilde{N}_+ - 2} \\ 
	 \frac{\tilde{N}_- + 2 }{\tilde{N}_- - 2}  <\; &p_-^*  < \frac{N+2}{N-2} \nonumber
\end{align}  

\noindent which are thresholds for the existence of radial positive solutions of (\ref{genprob}) in the ball or in $\mathbb{R}^N$.

\par More precisely they proved that existence in $B_R$ holds if and only if $ p < p_{\pm}^*$, while existence in $\mathbb{R}^N$ holds if and only if $ p  \geq p_{\pm}^*$. In the last case a complete classification of the solutions was given, according to the decay at infinity. Recently, the same kind of results has been obtained in \cite{MNP} for more general equations with an alternative proof.

\par In dimension N=2, the situation for the two operators $\mathcal{M}_{\lambda,\Lambda}^+,\mathcal{M}_{\lambda,\Lambda}^-$ is different. Indeed, by (\ref{Dimension}) we have that $\tilde{N}_{+} < 2 $, while $\tilde{N}_{-} > 2 $, for $\lambda < \Lambda$. In the first case the result of \cite{CL1} still implies the nonexistence of nontrivial solutions in $\mathbb{R}^N$ for the equation (\ref{genprob}) for $\mathcal{M}_{\lambda,\Lambda}^+$ for every $p \in (1,\infty)$.
\par As a consequence a positive radial solution of 

\begin{equation} 
	\begin{cases} 
		-\mathcal{M}_{\lambda,\Lambda}^+ D^2 u = u^p \qquad \text{ in } B_R\\
		u > 0 \qquad \text{ in } B_R \label{prob} \\
		u = 0 \qquad \text{ on } \partial B_R  
	\end{cases}
\end{equation} \\

\noindent exists for every $ 1 < p < \infty$ as in \cite{QS}. Thus no critical exponent as in (\ref{ce}) can be defined for $\mathcal{M}_{\lambda,\Lambda}^+$, in dimension 2.

\par Instead in the case of the operator $\mathcal{M}_{\lambda,\Lambda}^-$ the number $\tilde{N}_- -2$ is still positive which suggests that a critical exponent $p^*_-$ as in higher dimensions could exist, though, any upper bound for it as in (\ref{ce}), could not be given, since $N=2$.
\par In this paper we obtain new results for (\ref{genprob}) in $\mathbb{R}^2$, but of different kind for each one of the two operators.

In the case of $\mathcal{M}_{\lambda,\Lambda}^+$, when a unique radial positive solution of (\ref{prob}) in the ball exists for every $p > 1$( see Theorem \ref{prop}), we study the asymptotic behavior of $u_p$ as $p\rightarrow \infty$. This is done by analyzing the rescaled function

\begin{align}\label{rescale}
z_p = z_p(r) = \frac{p}{u_p(0)}(u(\varepsilon_p r) -u_p(0)),
\end{align} 

\noindent for $ r = \vert x \vert$, and

\begin{equation*}
	\varepsilon_p^{-2} = p \cdot u_p(0)^{p-1},
\end{equation*}

\noindent as in the semilinear case (\cite{AG},\cite{MIP}). As a byproduct, we obtain the existence of a radial solution of the "fully nonlinear Liouville equation" in the plane. As far as we know, this is the first existence result for this equation.

\noindent More precisely we get:
\begin{theorem} \label{conv} \label{description}
	The function $z_p$ converges, up to a subsequence, in $\mathcal{C}_{loc}^2(\mathbb{R}^2)$ to a radial function $z$ which satisfies:
	
	\begin{align}\label{liouv}
	-\mathcal{M}_{\lambda,\Lambda}^+ (D^2 z)(x) = e^z.
	\end{align}

\noindent Moreover $z$ is negative, radially decreasing and, as a function of $\vert x \vert$, changes concavity only once at $|x|=R_0=2\sqrt{2\lambda}$. Finally:
\begin{equation*}
	z(x)=\log\left( \frac{1}{(1+\frac{1}{8\lambda}|x|^2)^2} \right) \text{ in } B_{R_0}.
\end{equation*}
\end{theorem}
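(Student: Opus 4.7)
My first step is to derive the PDE satisfied by $z_p$. Using the $1$-homogeneity of $\mathcal{M}^+_{\lambda,\Lambda}$ together with the choice $\varepsilon_p^{-2} = p\, u_p(0)^{p-1}$, a direct computation gives
\begin{equation*}
-\mathcal{M}^+_{\lambda,\Lambda}(D^2 z_p)(x) = \left(1 + \frac{z_p(x)}{p}\right)^p \quad \text{in } B_{R/\varepsilon_p}.
\end{equation*}
Since $u_p$ is radial and attains its maximum at the origin, $z_p(0)=0$, $z_p'(0)=0$ and $-p \le z_p \le 0$, so the right-hand side is uniformly bounded by $1$. A preliminary task is to check that $R/\varepsilon_p \to +\infty$, which reduces to $u_p(0)^{p-1} \to \infty$; this follows from a uniform-in-$p$ lower bound $u_p(0) \ge c > 1$, obtained by comparing $u_p$ with a suitable radial subsolution.

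Next I would establish uniform local compactness. Exploiting the radial ODE for $z_p$ and splitting into the concave region (where $\mathcal{M}^+(D^2 z_p) = \lambda(z_p''+z_p'/r)$) and the convex region (where it equals $\Lambda z_p'' + \lambda z_p'/r$), the divergence-form representations
\begin{equation*}
-\lambda (r z_p')' = r (1+z_p/p)^p \quad \text{and} \quad -\Lambda (r^{\lambda/\Lambda} z_p')' = r^{\lambda/\Lambda} (1+z_p/p)^p
\end{equation*}
can be integrated from $0$ (respectively from the inflection point) to give uniform $C^1_{loc}$ bounds on $z_p$. Combined with Caffarelli's interior Hölder regularity for uniformly elliptic Pucci equations with bounded right-hand side, one gets uniform $C^{2,\alpha}_{loc}$ estimates; Arzelà--Ascoli then furnishes a subsequence $z_{p_k} \to z$ in $C^2_{loc}(\mathbb{R}^2)$. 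Locally uniform convergence of $z_{p_k}$ to the finite function $z$ forces $(1+z_{p_k}/p_k)^{p_k} \to e^z$ locally uniformly, so the limit $z$ solves $-\mathcal{M}^+_{\lambda,\Lambda}(D^2 z) = e^z$ and inherits $z(0)=0$, $z'(0)=0$, $z \le 0$, and radial monotonicity.

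To identify $z$ explicitly, I note that $e^{z(0)}=1$ forces $z$ to be strictly concave in a neighborhood of the origin, so in that neighborhood $z$ satisfies the radial Liouville equation $-\lambda(z'' + z'/r) = e^z$ with vanishing Cauchy data. Its unique smooth solution is $z(r) = -2\log(1 + r^2/(8\lambda))$; a direct computation gives $z''(r) = -4a(1-ar^2)/(1+ar^2)^2$ with $a=1/(8\lambda)$, which vanishes precisely at $R_0 = 2\sqrt{2\lambda}$. Hence the explicit formula is valid on $B_{R_0}$ and $z$ transitions to convexity exactly at $R_0$. For $r > R_0$ the function $z$ satisfies the other branch $-\Lambda z'' - \lambda z'/r = e^z$; matching Cauchy data at $R_0$ and a phase-plane analysis show that $z''>0$ and $z'<0$ persist for all $r > R_0$, so the concavity change occurs only once.

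The main obstacle is the uniform control of $\{z_p\}$ on large balls and the passage to the $C^2_{loc}$ limit: in the convex branch the exponent $\lambda/\Lambda < 1$ weakens the integral estimate, so one must rule out accumulation of inflection points and ensure that the branch of the Pucci operator used to interpret $-\mathcal{M}^+(D^2 z_p)$ stabilizes in the limit to match the two branches of the limiting equation across $R_0$. A secondary subtlety is the uniform lower bound on $u_p(0)$ needed for $R/\varepsilon_p \to \infty$, which requires a genuinely nonlinear barrier argument tailored to the Pucci setting.
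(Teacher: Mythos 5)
Your treatment of the convergence part is essentially sound: the rescaled equation, the uniform bound $0\le 1+z_p/p\le 1$, the $\mathcal{C}^\alpha$ and $\mathcal{C}^{2,\alpha}$ estimates and Arzelà--Ascoli follow the same pattern as the paper, and your ODE-integration route to local $\mathcal{C}^1$ bounds (integrating $-\lambda(rz_p')'$ and $-\Lambda(r^{\lambda/\Lambda}z_p')'$, using the single inflection point from Theorem \ref{prop} iii)) is a legitimate, more elementary substitute for the paper's ABP-plus-Harnack comparison argument. Likewise, identifying $z$ near the origin from $z(0)=z'(0)=0$ and $z''(0)=-1/(2\lambda)$ via Cauchy uniqueness is a nice shortcut past the paper's three-case analysis of $y_p/\varepsilon_p$, and the passage from a neighborhood of $0$ to all of $B_{R_0}$ is a standard continuation argument that you gloss but could fill. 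However, your preliminary step is not proved: you reduce $R/\varepsilon_p\to\infty$ to $u_p(0)^{p-1}\to\infty$ and assert a uniform bound $u_p(0)\ge c>1$ by an unspecified ``radial subsolution''; this bound is nontrivial and, more importantly, unnecessary. All that is needed is $p\,u_p(0)^{p-1}\to\infty$, which follows from the much weaker fact $u_p(0)^{p-1}\ge \lambda_1(\mathcal{M}^+_{\lambda,\Lambda},B_R)$ (otherwise $u_p^p<\lambda_1 u_p$ contradicts the characterization of the principal eigenvalue), the factor $p$ doing the rest; as written, your lemma is an unsupported claim.

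The genuine gap is the statement that $z$ changes concavity only once. Your argument for $r>R_0$ is circular as written: you declare that $z$ satisfies the $\Lambda$-branch $-\Lambda z''-\lambda z'/r=e^z$ for all $r>R_0$, which already presupposes $z''\ge 0$ there, and then appeal to ``a phase-plane analysis'' to conclude $z''>0$ persists, without giving any such analysis. Ruling out a later return to concavity (or oscillation of the sign of $z''$) for the limiting equation is precisely the nontrivial content of this part of the theorem: at a putative second inflection point $r_1$ one only gets the relation $-\lambda z'(r_1)/r_1=e^{z(r_1)}$, and the sign of the derivative of $\lambda z''$ there depends on whether $r_1|z'(r_1)|\lessgtr 1$, so nothing rules it out by inspection. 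The paper avoids this issue entirely by inheriting the concavity structure from the approximating family: each $z_p$ has a unique inflection point at $y_p/\varepsilon_p$ (Theorem \ref{prop} iii)), the three-case analysis shows that (up to a subsequence) $y_p/\varepsilon_p\to R_0\in(0,\infty)$ with $R_0=2\sqrt{2\lambda}$, and the $\mathcal{C}^2_{loc}$ convergence then yields $z''\le 0$ on $(0,R_0)$ and $z''\ge 0$ on $(R_0,\infty)$. Since you never use the inflection points of $z_p$ in this part, your proof is missing the key step; you must either supply an actual ODE/dynamical-systems argument for the limit equation beyond $R_0$, or argue through the convergence of $y_p/\varepsilon_p$ as the paper does.
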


In the case of the operator $\mathcal{M}_{\lambda,\Lambda}^-$, we are able to prove that in dimension $N=2$, a critical exponent $p_-^*$ having similar features as the one for $N\geq3$ indeed exists and we provide bounds for it.
\par As mentioned before, to get an upper bound for $p_-^*$ is not obvious since the corresponding estimate in higher dimensions in (\ref{ce}) blows up when $N=2$.
\par We get it through the existence of another relevant exponent, denoted by $\tilde{p}_-$, which is responsible for the existence or lack of periodic orbits for a related dynamical system that we study, following the approach of \cite{MNP}.
\par As it will be made clear in Section 3, the periodic orbits of this auxiliary dynamical system are related to the nonexistence of solutions in the ball, and possibly allow the existence of entire oscillating radial solutions for (\ref{genprob}).
\par Referring to Definition 3.5 for fast, slow, or pseudo-slow decaying solutions we state the main result for the operator $\mathcal{M}_{\lambda,\Lambda}^-$.

\begin{theorem}[Critical Exponent]  \label{criticalexp}
	
	Let the dimension $N$ be two, there are exponents $p_-^*$ and $\tilde{p}_- $ satisfying:
	\begin{equation}\label{expbound}
	 \frac{\tilde{N}_{-} +2}{\tilde{N}_{-} -2} < p_{-}^* \leq  \tilde{p}_{-} \leq \frac{\tilde{N}_{-} +2}{\tilde{N}_{-} -2} + \frac{4}{\tilde{N}_{-} -2+\frac{\lambda}{\Lambda}(\tilde{N}_{-} -2)^2} 
	\end{equation}
 with $\tilde{N}_-$ as in (\ref{Dimension}), such that, considering equation (\ref{genprob}) for $\mathcal{M}_{\lambda,\Lambda}^-$: \\
	\begin{enumerate}[i)] 
	\item for $ p < p_{-}^*$ there is no nontrivial radial positive solution of problem (\ref{genprob}) in $\mathbb{R}^2$, while, for every $R>0$ there is a unique positive radial solution in $B_R$.
	\item if $ p = p_{-}^*$ there is a unique fast decaying radial positive solution of (\ref{genprob}) in $\mathbb{R}^2$
	\item for $ p_{-}^* < p < \tilde{p}_{-}$ there is a unique positive radial solution of (\ref{genprob}) in $\mathbb{R}^2$, which may be either pseudo-slow or slow decaying.
	\item for $ p > \tilde{p}_{-}$ there is a unique slow decaying solution of (\ref{genprob}) in $\mathbb{R}^2$ 
	\item for $ p > p_{-}^* $ there is no positive radial solution of (\ref{genprob}) in $B_R$.
		
	\end{enumerate}

	In the case of $\mathbb{R}^2$ uniqueness is meant up to scaling.\\
\end{theorem}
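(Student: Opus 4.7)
Following the dynamical-systems approach of \cite{MNP}, I would rewrite the radial ODE associated to \eqref{genprob} as a planar autonomous system and read off the five statements (i)--(v) from its phase portrait. For a positive radial $C^2$ solution with $u'(r)<0$, the eigenvalues of $D^2 u$ are $u''(r)$ and $u'(r)/r<0$, so $\mathcal{M}^-_{\lambda,\Lambda}(D^2 u)$ equals $\lambda u''+\Lambda u'/r$ on the set $\{u''>0\}$ and $\Lambda(u''+u'/r)$ on $\{u''<0\}$. The equation therefore reduces to a radial Laplace-type ODE with effective dimension $\tilde N_-$ in the convex region and effective dimension $N=2$ (up to the factor $\Lambda$) in the concave region, and the separable profile $u_\infty(r)=c\,r^{-2/(p-1)}$, with $c$ determined by $p$, $\lambda$, $\Lambda$, is the relevant singular solution around which the analysis is organized.

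Next I would perform an Emden--Fowler type change of variables $t=\log r$ together with two scale-invariant quantities such as $X=-ru'/u$ and $Y=r^2 u^{p-1}$, turning the ODE into a planar system on the half-plane $\{X>0\}$. The system admits two critical points: the origin, corresponding both to regular behaviour at $r=0$ and to fast decay at infinity, and a point $P_*$ corresponding to $u_\infty$. Linearizing at $P_*$ produces a Jacobian whose trace has a fixed sign and whose discriminant $\Delta(p)$ depends continuously on $p$: it is negative (so $P_*$ is a focus) for $p$ close to $\tfrac{\tilde N_-+2}{\tilde N_--2}$ and becomes positive (so $P_*$ is a node) for $p$ large. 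Setting $\Delta(\tilde p_-)=0$ defines $\tilde p_-$, and a direct computation with the linearized matrix yields precisely the upper bound in \eqref{expbound}. The lower bound $\tfrac{\tilde N_-+2}{\tilde N_--2}<p_-^*$ follows from the oscillation analysis at $P_*$ and mirrors the higher-dimensional proof of \cite{FQ}.

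The exponent $p_-^*$ itself would be defined by a shooting argument: by scaling, all regular initial data $u(0)=\alpha>0$ produce trajectories lying on a single orbit $\Gamma_p$ emanating from the origin along the unstable manifold. For $p$ just above $\tfrac{\tilde N_-+2}{\tilde N_--2}$, the focus $P_*$ is strongly spiralling, $\Gamma_p$ crosses the axis $\{X=0\}$, and the corresponding solution of \eqref{genprob} vanishes at a finite radius, giving the ball solution of (i). I would then define $p_-^*$ as the infimum of exponents for which $\Gamma_p$ no longer meets $\{X=0\}$; at equality, $\Gamma_p$ coincides with the stable manifold of the origin, producing the unique fast decaying entire solution of (ii). For $p_-^*<p<\tilde p_-$, the equilibrium $P_*$ is still a focus and $\Gamma_p$ spirals into it, giving the pseudo-slow or slow decay of (iii); for $p>\tilde p_-$, $P_*$ is a stable node and $\Gamma_p$ approaches it monotonically, giving the slow decay of (iv). Statement (v) is then an immediate consequence: for $p>p_-^*$, $\Gamma_p$ never meets $\{X=0\}$, so the corresponding radial $u$ stays strictly positive on all of $\mathbb{R}^2$ and cannot satisfy a homogeneous Dirichlet condition on any $\partial B_R$.

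The main technical obstacle is that the dynamical system is only \emph{piecewise} smooth, since $\mathcal{M}^-_{\lambda,\Lambda}$ changes form across the curve $\{u''=0\}$: one must glue trajectories across this curve and verify that the linearization at $P_*$, the unstable manifold at the origin and the shooting argument all survive this nonsmoothness. A second, equally crucial difficulty is to rule out periodic orbits of the planar system in the whole range $p<\tilde p_-$: their absence is what guarantees that $\Gamma_p$ genuinely winds into $P_*$ (instead of being trapped in a limit cycle) and thus yields uniqueness up to scaling in (iii). The required Lyapunov-type functional, already built for $N\ge 3$ in \cite{MNP}, has to be carefully adapted to the two-dimensional setting, and it is the sharpness of this construction that forces the exact quantitative form of the upper bound in \eqref{expbound}.
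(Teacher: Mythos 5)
Your overall framework (the Emden--Fowler change of variables, the single regular trajectory $\Gamma_p$ coming from the point that describes behaviour at $r=0$, and reading off fast/slow/pseudo-slow decay from its $\omega$-limit) is the same as the paper's, which also follows \cite{MNP}. The genuine gap is in how you define and use $\tilde{p}_-$. You take $\tilde{p}_-$ to be the value where the discriminant of the linearization at the slow-decay equilibrium vanishes (focus-to-node transition) and claim that this ``direct computation'' yields the upper bound in \eqref{expbound} and, for $p>\tilde{p}_-$, the slow decay in (iv). This does not suffice: the local type of the equilibrium says nothing about the global phase portrait, and a stable node can perfectly well be encircled by limit cycles, so $\Gamma_p$ could still converge to a periodic orbit (pseudo-slow decay) instead of the equilibrium. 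Worse, the inequality $p_-^*\le\tilde{p}_-$, i.e.\ the nonexistence of ball solutions for $p>\tilde{p}_-$, is obtained in the paper (Theorem \ref{soluball}) by backtracking the orbit whose $\omega$-limit is $A_0$ and ruling out that its $\alpha$-limit is a periodic orbit around $M_0$; this requires a \emph{global} exclusion of periodic orbits for $p>\tilde{p}_-$, which is exactly how $\tilde{p}_-$ is defined in (\ref{pmenos}). The upper bound in \eqref{expbound} then comes from Theorems \ref{porbits} and \ref{boundpmenos}: a Dulac-type function $\varphi(X,Z)=X^\alpha Z^\beta$ with $\beta=\frac{3-p}{p-1}$, Green's formula over the region enclosed by a putative periodic orbit, and a comparison of the areas of its intersections with the concavity/convexity regions $R^\pm$ via explicit rectangles. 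A discriminant computation at the equilibrium gives a different threshold and cannot be expected to reproduce this bound; note also that any periodic orbit must cross the line $Z=\Lambda$ where the field (\ref{system}) changes form, so no analysis local to $M_0$ (which lies inside $R^-$) can detect or exclude such orbits.

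You also have the role of periodic orbits reversed: you say their absence must be proved ``in the whole range $p<\tilde{p}_-$'' to obtain (iii), but in $(p_-^*,\tilde{p}_-)$ periodic orbits are allowed -- they are precisely what produces the pseudo-slow decaying solutions of (iii). What the argument needs is their absence for $p<\frac{\tilde{N}_-+2}{\tilde{N}_--2}$ (used for the lower bound on $p_-^*$) and for $p>\tilde{p}_-$ (used for (iv) and (v)). Two further points: with $X=-ru'/u$, a solution vanishing at a finite radius corresponds to $X\to+\infty$ with the second variable tending to $0$ (the trajectory blows up in finite time), not to a crossing of $\{X=0\}$; and defining $p_-^*$ as the infimum of exponents for which $\Gamma_p$ no longer produces a ball solution does not by itself give (v) for every $p>p_-^*$, since a priori the set of such exponents need not be upward closed. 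The paper instead sets $p_-^*=\sup\mathcal{C}$, with $\mathcal{C}$ the set of exponents admitting a ball solution, so that (v) is automatic, and then proves the remaining items (including the strict inequality $p_-^*>\frac{\tilde{N}_-+2}{\tilde{N}_--2}$, which is delicate and is carried over from \cite{MNP} rather than obtained by an oscillation argument at the equilibrium).
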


We recall that, by using the moving plane method, it is proved in \cite{LS} that every positive solution of (\ref{genprob}) in the ball $B_R$ satisfying $u=0$ on $\partial B_R$ is radial. Thus, by Theorem \ref{criticalexp}, we have that such a solution in $B_R$ exists if and only if $ p < p_-^*$.
\par Then, we could study the asymptotic behavior of $u_p$, as $p \nearrow p_-^*$, to understand the limit profile of these solutions.
\par As for the higher dimensional case (\cite{BGLP}) we get:

\begin{theorem} \label{convergenceminus}
	
	Let $N=2$, $p_-^*$ as in Theorem \ref{criticalexp} and $\varepsilon >0$. Then, for the solution $u_{p_\varepsilon}$ of (\ref{genprob}) (for $\mathcal{M}_{\lambda,\Lambda}^-$) in the ball $B_R$,  with $p_\varepsilon= p_-^*-\varepsilon$, the following statements hold:
	
	\begin{enumerate}[i)]
		\item $ M_{\varepsilon} = u_{p_\varepsilon}(0) \rightarrow \infty$ 
		\item the rescaled function
		
		\begin{align*}
		u_\varepsilon = \frac{1}{u_{p_\varepsilon}(0)}u_{p_\varepsilon}\left(\frac{x}{u_{p_\varepsilon}(0)^{\frac{p_\varepsilon -1}{2}}}\right)
		\end{align*}

	converges up to a subsequence, to a limit function $U$ in $C^2_{loc}(\mathbb{R}^2)$ where $U$ is the unique positive solution of:
	
	\begin{align*}
	-\mathcal{M}_{\lambda,\Lambda}^- (D^2 U)(x) = U^{p_{-}^*} \qquad \text{ in } \mathbb{R}^2
	\end{align*} 
	satisfying $U(0)=1$
	\item $u_{p_\varepsilon} \rightarrow 0$ in $\mathcal{C}_{loc}^2 (\bar{B}_R \setminus \{0\})$.
\end{enumerate}
\end{theorem}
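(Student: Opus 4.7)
The plan is to adapt the blow-up/concentration analysis used for the analogous higher-dimensional result in \cite{BGLP} to $N = 2$, combining Caffarelli's interior $C^{2,\alpha}$ regularity for uniformly elliptic fully nonlinear equations with the identification of the critical bubble from Theorem \ref{criticalexp}(ii). Radial symmetry (guaranteed by \cite{LS}) and the radial monotonicity of $u_{p_\varepsilon}$ reduce several of the required estimates to one-dimensional arguments.

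For (i), I argue by contradiction: suppose along a subsequence $M_\varepsilon = u_{p_\varepsilon}(0)$ stays bounded. Uniform $C^{2,\alpha}$ estimates yield a $C^2(\bar{B}_R)$-limit $u_0 \geq 0$ solving $-\mathcal{M}_{\lambda,\Lambda}^-(D^2 u_0) = u_0^{p_-^*}$ in $B_R$ with $u_0 = 0$ on $\partial B_R$. If $u_0 > 0$, it is a positive radial solution of (\ref{genprob}) at $p = p_-^*$ in $B_R$, which is ruled out by the limiting case of Theorem \ref{criticalexp}(v); this nonexistence at the critical exponent can be derived from a Pohozaev-type identity for $\mathcal{M}_{\lambda,\Lambda}^-$ on star-shaped domains. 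If $u_0 \equiv 0$, I rescale by the maximum: $v_\varepsilon := u_{p_\varepsilon}/M_\varepsilon$ satisfies $v_\varepsilon(0) = 1$ and $-\mathcal{M}_{\lambda,\Lambda}^-(D^2 v_\varepsilon) = M_\varepsilon^{p_\varepsilon - 1} v_\varepsilon^{p_\varepsilon}$, with right-hand side tending to zero uniformly; any $C^2$-limit $v_0$ is a nonnegative $\mathcal{M}_{\lambda,\Lambda}^-$-harmonic function with $v_0 = 0$ on $\partial B_R$ and $v_0(0) = 1$, contradicting the strong maximum principle.

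For (ii), set $u_\varepsilon(y) = M_\varepsilon^{-1} u_{p_\varepsilon}\bigl(y M_\varepsilon^{-(p_\varepsilon-1)/2}\bigr)$, which is radial with $u_\varepsilon(0) = 1$ and $0 \leq u_\varepsilon \leq 1$, and which solves $-\mathcal{M}_{\lambda,\Lambda}^-(D^2 u_\varepsilon) = u_\varepsilon^{p_\varepsilon}$ in $B_{R M_\varepsilon^{(p_\varepsilon-1)/2}}$; by (i) these rescaled balls exhaust $\mathbb{R}^2$. Interior $C^{2,\alpha}$ estimates provide a subsequence converging in $C^2_{loc}(\mathbb{R}^2)$ to a radial $U$ with $U(0) = 1$ solving $-\mathcal{M}_{\lambda,\Lambda}^-(D^2 U) = U^{p_-^*}$; positivity of $U$ follows from the strong maximum principle. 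Theorem \ref{criticalexp}(ii) states that the fast-decaying positive radial solution at the critical exponent is unique up to scaling, and the normalization $U(0) = 1$ fixes this scaling, so the limit $U$ is independent of the subsequence.

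For (iii), the key quantitative input is the fast decay $U(y) = O(|y|^{-(\tilde{N}_{-}-2)})$ at infinity, obtained from the dynamical-system analysis of Section 3. Combining this with the uniform $C^2_{loc}$-convergence $u_\varepsilon \to U$ and the radial monotonicity of $u_\varepsilon$ yields, for any fixed $r_0 \in (0, R]$,
$$u_{p_\varepsilon}(r_0) = M_\varepsilon\, u_\varepsilon\bigl(r_0 M_\varepsilon^{(p_\varepsilon-1)/2}\bigr) \leq C\, r_0^{-(\tilde{N}_{-}-2)} M_\varepsilon^{1 - (\tilde{N}_{-}-2)(p_\varepsilon-1)/2}.$$
The strict lower bound $p_-^* > (\tilde{N}_{-}+2)/(\tilde{N}_{-}-2)$ from Theorem \ref{criticalexp} makes the exponent of $M_\varepsilon$ strictly negative for $\varepsilon$ small, so $u_{p_\varepsilon}(r_0) \to 0$; radial monotonicity upgrades this to uniform convergence to zero on $\{|x| \geq r_0\}$, and interior elliptic regularity then gives $C^2_{loc}(\bar{B}_R \setminus \{0\})$ convergence. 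The main obstacle lies in this step: in two dimensions one loses the Pohozaev $L^{p+1}$-bound available for $N \geq 3$, so the argument must rely on extending the pointwise fast-decay asymptotics of $U$ uniformly along the sequence $u_\varepsilon$ out to the large scale $r_0 M_\varepsilon^{(p_\varepsilon-1)/2}$; this, together with the nonexistence at $p = p_-^*$ in $B_R$ needed in (i), is the part where the techniques of \cite{BGLP} need the most careful two-dimensional adaptation.
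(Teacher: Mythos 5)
Your part ii) coincides with the paper's own argument: rescale, observe $0\le u_\varepsilon\le 1$ with $u_\varepsilon(0)=1$, apply Pucci's lemma together with interior $C^{\alpha}$ and $C^{2,\alpha}$ estimates, and conclude by Arzel\`a--Ascoli; for i) and iii) the paper simply follows \cite{BGLP}, remarking that iii) can be obtained more easily from the dynamical system of \cite{MNP}, so your overall strategy is the intended one. However, one step of your part i) is genuinely wrong as stated: the nonexistence of a Dirichlet solution in $B_R$ at $p=p_-^*$ cannot be ``derived from a Pohozaev-type identity for $\mathcal{M}^-_{\lambda,\Lambda}$ on star-shaped domains''. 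Pucci's extremal operators are not variational and no Pohozaev identity is available for them; this lack of divergence structure is precisely why the paper (and \cite{FQ}, \cite{MNP}) argues through the phase plane. The fact you need is nonetheless provided by the paper's construction of $p_-^*$: at $p=p_-^*$ the unique trajectory emanating from $N_0$ has $\omega$-limit $A_0$ (equivalently, the set $\mathcal{C}$ of exponents admitting a ball solution is open and $p_-^*=\sup\mathcal{C}\notin\mathcal{C}$), hence it does not blow up in finite time and no ball solution exists at the critical exponent. Cite that instead of a Pohozaev argument. In the same part, you should also justify that the limits $u_0$, $v_0$ attain the zero boundary datum on $\partial B_R$ (global barriers or up-to-the-boundary estimates), since interior estimates alone do not give convergence up to $\partial B_R$.

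The second genuine gap is in iii). The bound $u_\varepsilon(s)\le C\,s^{-(\tilde{N}_--2)}$ is applied at $s=r_0 M_\varepsilon^{(p_\varepsilon-1)/2}\to\infty$, but the $C^2_{loc}$ convergence $u_\varepsilon\to U$ controls $u_\varepsilon$ only on compact sets; moreover $u_\varepsilon$ solves the equation with the subcritical exponent $p_\varepsilon<p_-^*$ and vanishes at $|y|=R\,M_\varepsilon^{(p_\varepsilon-1)/2}$, so a fast-decay estimate uniform in $\varepsilon$ out to that scale is exactly the delicate point, and you leave it unproved (you acknowledge as much). As written, iii) is incomplete. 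Two ways to close it: import the uniform decay/concentration lemmas of \cite{BGLP} with the two-dimensional changes made explicit, or follow the paper's suggestion and work in the $(X,Z)$ plane, where for $p_\varepsilon$ close to $p_-^*$ the orbit $\Gamma_{p_\varepsilon}$ from $N_0$ passes arbitrarily close to the saddle $A_0$ and stays near it for a long time before blowing up; the resulting lower bound $X_{p_\varepsilon}(t)=-ru_{p_\varepsilon}'(r)/u_{p_\varepsilon}(r)\ge \tilde{N}_--2-\delta$ on a long $t$-interval integrates to the desired smallness of $u_{p_\varepsilon}$ on $\{|x|\ge r_0\}$, with no need for uniform asymptotics of the rescaled family at infinity.
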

The proof of the previous theorem is similar to that of \cite{BGLP} for the analogous results in higher dimension, though the statement iii) could be obtained more easily analyzing the dynamical system introduced in \cite{MNP}. Also, the results about the energy invariance of \cite{BGLP} can be easily extended to the two-dimensional case.
\par Finally, a classification of solutions of (\ref{genprob}) singular at the origin similar to the one of Theorem 1.8 of \cite{MNP} follows in the same way, with obvious changes.

\par We conclude with some remarks about higher dimensions which are related to our results for $N=2$. 
\par First when the dimension $N$ is greater than two, we may consider the case where $\tilde{N}_+$ as defined in (\ref{Dimension}) is smaller than two. Then the results of \cite{CL1} and \cite{QS} still imply that there is, for every $ p \in (1,\infty)$, a unique radial positive solution $u_p$ of (\ref{prob}) in the ball $B_R$. 
\par In particular, the approach we use to treat the two dimensional problem may be immediately applied in this setting producing results analogous to those of Theorem \ref{conv}.
Thus we get solutions of the Liouville equation (\ref{liouv}) in some higher dimensions. Note that even for the semilinear case, where the Pucci operator $\mathcal{M}_{\lambda,\Lambda}^+$ is replaced by the Laplacian the corresponding equation (\ref{liouv}) has different features according to the dimension (see \cite{DF}, \cite{JL} and references therein). It would be interesting to investigate problem (\ref{liouv}) in the fully nonlinear setting in all dimensions and also for the operator $\mathcal{M}_{\lambda,\Lambda}^-$. 
\par Concerning the operator $\mathcal{M}_{\lambda,\Lambda}^-$, the approach we have used in Section 3 to estimate $p_{-}^*$ as in (\ref{expbound}) can also be used in higher dimensions to have an estimate of the critical exponent from above better than the one given in (\ref{ce}) and to determine the existence of oscillating solutions. This will be done in a forthcoming paper.

\par The paper is organized as follows. In Section 2 we study the equation (\ref{genprob}) for the operator $\mathcal{M}_{\lambda,\Lambda}^+$ proving Theorem \ref{conv}. Section 3 is devoted to the Pucci Operator $\mathcal{M}_{\lambda,\Lambda}^-$. After recalling several preliminaries about an associated dynamical system we prove a result on the nonexistence of periodic orbits for such a system. This allows to prove Theorem \ref{criticalexp}. We end Section 3 with the proof of Theorem \ref{convergenceminus}. 

\section{The Problem for $\mathcal{M}^+$}
\subsection{Some Preliminary Results}
Here we present some needed classical results.

\begin{theorem}[Pucci's lemma \cite{CP}]\label{PL} 
		Let $\Omega \subset \mathbb{R}^N$ be open and let $u: \Omega  \rightarrow \mathbb{R}$ be a $\mathcal{C}^2(\Omega)$ function. Then for every $x \in \Omega$ there is a ($\lambda$, $\Lambda$) elliptic matrix $A(x)$ depending on $u$ such that:
	
	\begin{align}
	\mathcal{M}_{\lambda,\Lambda}^+ (D^2 u)(x) = Tr(A(x) D^2 u(x))
	\end{align}
	
	\noindent and $A(x)$ is a measurable function with respect to $x$.
\end{theorem}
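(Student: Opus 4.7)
The plan is to obtain $A(x)$ pointwise from a variational characterisation of $\mathcal{M}^+_{\lambda,\Lambda}$, and then to handle measurability as a separate issue.

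First I would recall the well-known identity
\begin{equation*}
\mathcal{M}_{\lambda,\Lambda}^+(M) \;=\; \sup_{A\in\mathcal{A}_{\lambda,\Lambda}} \mathrm{Tr}(AM),
\end{equation*}
valid for every symmetric $N\times N$ matrix $M$, where $\mathcal{A}_{\lambda,\Lambda}$ denotes the compact, convex set of symmetric matrices with eigenvalues in $[\lambda,\Lambda]$. To verify this I would diagonalise $M = P^{\top}\mathrm{diag}(\mu_1,\dots,\mu_N)\,P$ and note that $\mathrm{Tr}(AM) = \mathrm{Tr}\bigl((PAP^{\top})\,\mathrm{diag}(\mu_i)\bigr)$ with $PAP^{\top} \in \mathcal{A}_{\lambda,\Lambda}$, so the supremum is attained at the matrix which is diagonal in the eigenbasis of $M$ with entry $\Lambda$ on directions where $\mu_i>0$ and $\lambda$ on the remaining ones. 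Applied pointwise to $M = D^2 u(x)$ this yields a $(\lambda,\Lambda)$-elliptic matrix
\begin{equation*}
A(x) \;=\; P(x)^{\top}\,\mathrm{diag}\bigl(\tilde a_1(x),\dots,\tilde a_N(x)\bigr)\,P(x),\qquad \tilde a_i(x)\in\{\lambda,\Lambda\},
\end{equation*}
with $\mathcal{M}^+_{\lambda,\Lambda}(D^2 u(x)) = \mathrm{Tr}\bigl(A(x)\,D^2 u(x)\bigr)$, as required.

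The remaining task is to show that $x \mapsto A(x)$ can be chosen measurably. Since $u \in \mathcal{C}^2(\Omega)$ the map $x \mapsto D^2 u(x)$ is continuous, but the eigenframe $P(x)$ need only be measurable, because eigenvectors of a symmetric matrix do not depend continuously on the matrix at points of eigenvalue coalescence. I would resolve this in one of two equivalent ways. Either apply the Kuratowski--Ryll-Nardzewski measurable selection theorem to the set-valued map $x \mapsto \mathrm{argmax}_{A\in\mathcal{A}_{\lambda,\Lambda}} \mathrm{Tr}(A\,D^2 u(x))$, whose values are non-empty compact subsets of the compact metrisable set $\mathcal{A}_{\lambda,\Lambda}$ and whose graph is closed by continuity of $D^2u$; or, equivalently, write
\begin{equation*}
A(x) \;=\; \lambda\,I + (\Lambda-\lambda)\,\Pi_+\!\bigl(D^2 u(x)\bigr),
\end{equation*}
where $\Pi_+(M)$ is the spectral projector onto the strictly positive part of $M$, produced by Borel functional calculus from the indicator $\mathbf{1}_{(0,\infty)}$. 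In this second form $A$ is the composition of a Borel map with the continuous map $D^2u$, hence Borel measurable in $x$.

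The main obstacle is precisely this measurability step: a naive attempt to pick a continuous eigenframe fails at multiplicity crossings, and one is forced either into a measurable selection argument or into Borel spectral calculus. The pointwise construction of $A(x)$ is otherwise an exercise in linear algebra.
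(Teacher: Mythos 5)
Your argument is correct, but note that the paper does not prove this statement at all: it is quoted as a classical result of Pucci with the reference \cite{CP}, so there is no in-paper proof to compare against. Your self-contained proof is sound. The variational identity $\mathcal{M}^+_{\lambda,\Lambda}(M)=\sup_{A\in\mathcal{A}_{\lambda,\Lambda}}\mathrm{Tr}(AM)$ is verified correctly by diagonalisation, and you rightly identify measurability of the selection as the only real issue, since the eigenframe of $D^2u(x)$ need not vary continuously where eigenvalues coalesce. Of your two remedies, the explicit formula $A(x)=\lambda I+(\Lambda-\lambda)\Pi_+\bigl(D^2u(x)\bigr)$ is the cleaner one: it manifestly gives a symmetric matrix with spectrum in $\{\lambda,\Lambda\}$, one checks directly that $\mathrm{Tr}(A(x)D^2u(x))=\Lambda\sum_{\mu_i>0}\mu_i+\lambda\sum_{\mu_i\le 0}\mu_i=\mathcal{M}^+_{\lambda,\Lambda}(D^2u(x))$, and Borel measurability follows because $\Pi_+(M)$ is a pointwise limit of continuous matrix functions $f_k(M)$ with $f_k(t)=\min(1,\max(0,kt))$, composed with the continuous map $x\mapsto D^2u(x)$; in fact $A$ is continuous off the closed set where $D^2u$ has a zero eigenvalue, which is more regularity than the statement requires. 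The Kuratowski--Ryll-Nardzewski route also works (the argmax correspondence has closed graph and compact values in the compact set $\mathcal{A}_{\lambda,\Lambda}$), but it is heavier machinery than needed given that an explicit Borel selection is available.
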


\begin{theorem}[\cite{GT}]\label{harnack}
	Let $L$ be an uniformly elliptic linear operator with measurable bounded coefficients, and $\Omega,u$ as above. If $u \geq 0 $ and $Lu =0$ in $\Omega$, then for every $\Omega' \subset \subset \Omega$ we have
	
	\begin{align}
		\sup_{\Omega'} u \leq C \inf_{\Omega} u
	\end{align}
\end{theorem}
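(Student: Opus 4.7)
The theorem is the classical Krylov--Safonov Harnack inequality for non-divergence uniformly elliptic operators with merely measurable coefficients, so the natural plan is to follow the two-step strategy of Chapter 9 of \cite{GT}: first prove the Alexandrov--Bakelman--Pucci (ABP) maximum principle, then use it to derive both a weak Harnack inequality for supersolutions and a local maximum principle for subsolutions, and finally combine them. As a preliminary reduction, a standard chain-of-balls argument, whose length depends only on $\mathrm{diam}\,\Omega'$ and $\mathrm{dist}(\Omega',\partial\Omega)$, shows that it suffices to prove that for any nonnegative $u$ with $Lu=0$ in the unit cube $Q_1$ one has $\sup_{Q_{1/4}} u \le C(N,\lambda,\Lambda)\inf_{Q_{1/4}} u$.

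For the ABP step I would show that any $v\in C(\overline{Q_1})\cap C^2(Q_1)$ with $Lv\ge f$ in $Q_1$ and $v\ge 0$ on $\partial Q_1$ satisfies
\[
\sup_{Q_1} v^- \;\le\; C(N,\lambda,\Lambda)\,\|f^+\|_{L^N(\Gamma^-)},
\]
where $\Gamma^-$ is the upper contact set of the concave envelope of $-v^-$. The argument applies the area formula to the normal map of that envelope and uses the pointwise bound $\det(-D^2\Gamma^-)\ge c\,|f^+|^N$, which follows from the arithmetic--geometric mean inequality applied to the eigenvalues of $-D^2\Gamma^-\ge 0$ together with uniform ellipticity of $L$.

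From ABP one next extracts a measure-decay estimate for nonnegative supersolutions: applying ABP to $u+\varphi$ for a carefully constructed barrier $\varphi$, which is a subsolution supported near $Q_1$ and strictly negative on $Q_{1/2}$, one shows that there exist universal $M>1$ and $\epsilon\in(0,1)$ such that $\inf_{Q_{1/2}} u \le 1$ forces $|\{u>M\}\cap Q_1|\le 1-\epsilon$, and iteration through a Calderón--Zygmund cube decomposition yields $|\{u>M^k\}\cap Q_1|\le (1-\epsilon)^k$. This polynomial decay is equivalent to the weak Harnack inequality
\[
\Bigl(\int_{Q_{1/2}} u^{p_0}\Bigr)^{1/p_0} \;\le\; C\,\inf_{Q_{1/4}} u
\]
for a universal small $p_0>0$. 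A parallel ABP-based iteration on subsolutions produces the local maximum principle $\sup_{Q_{1/4}} u \le C\bigl(\int_{Q_{1/2}} u^{p_0}\bigr)^{1/p_0}$, and since $Lu=0$ makes $u$ simultaneously a sub- and supersolution, chaining the two bounds yields the Harnack estimate on cubes; the preliminary chain reduction then transfers it to the statement on $\Omega'\Subset\Omega$.

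I expect the main obstacle to be the measure-decay step itself: the mere measurability of the coefficients forbids any appeal to classical smoothness, Schauder regularization, or Moser-type energy arguments, so one must construct the barrier $\varphi$ by hand as a nonnegative supersolution of the extremal Pucci operator (which by Theorem \ref{PL} dominates $L$) with explicit pointwise control, and then execute the pointwise-to-measure ABP argument entirely at the level of strong or viscosity solutions, tracking uniform dependence of all constants on $N$, $\lambda$, and $\Lambda$ alone.
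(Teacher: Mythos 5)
Your outline is exactly the classical Krylov--Safonov argument (ABP maximum principle, barrier plus Calder\'on--Zygmund measure decay, weak Harnack for supersolutions combined with the local maximum principle for subsolutions, then a chain of balls), which is precisely the proof in Chapter 9 of \cite{GT} that the paper invokes without reproducing it, so your approach coincides with the paper's source. Note only that what this argument yields (and what is actually true and used later) is $\sup_{\Omega'} u \le C \inf_{\Omega'} u$; the $\inf_{\Omega}$ appearing in the statement is evidently a typo.
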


\begin{theorem}[\cite{GT}] \label{estimate}
	Let $L,\Omega$ and $u$ be as in Theorem \ref{harnack}. If $Lu = f $ in $\Omega$, for some $f \in \mathcal{L}_{loc}^n(\Omega)$, then for every $\Omega' \subset \subset \Omega$ we have
	
	\begin{align}
	||u||_{\mathcal{C}^\alpha(\Omega')} u \leq C \left(\sup_{\Omega} u + ||f||_{\mathcal{L}_{loc}^n(\Omega)}\right) 
	\end{align}
\end{theorem}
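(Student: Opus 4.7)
The plan is to derive the interior Hölder estimate as a consequence of the Harnack inequality of Theorem \ref{harnack} together with the Alexandrov--Bakelman--Pucci (ABP) maximum principle used to control the inhomogeneous term. By a covering and translation argument it is enough to prove the estimate on every ball $B_r(x_0) \subset\subset \Omega$ with $x_0 \in \Omega'$, and by linearity we may normalize so that $\sup_\Omega u + \|f\|_{\mathcal{L}^n(\Omega)} \leq 1$.

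The central step is an oscillation-decay lemma. For $B_{2r}(x_0)\subset\Omega$ set
\begin{equation*}
M(r) = \sup_{B_r(x_0)} u, \qquad m(r) = \inf_{B_r(x_0)} u, \qquad \omega(r) = M(r) - m(r).
\end{equation*}
Decompose $u = v + w$ on $B_{2r}(x_0)$, where $Lv = 0$ with $v = u$ on $\partial B_{2r}$ and $Lw = f$ with $w = 0$ on $\partial B_{2r}$. The ABP estimate gives $\|w\|_{L^\infty(B_{2r})} \leq C r\,\|f\|_{\mathcal{L}^n(B_{2r})}$, while Theorem \ref{harnack} applied separately to the non-negative functions $\sup_{B_{2r}} v - v$ and $v - \inf_{B_{2r}} v$ (both killed by $L$) produces
\begin{equation*}
\omega(r) \;\leq\; \theta\, \omega(2r) + C\, r\, \|f\|_{\mathcal{L}^n(B_{2r})},
\end{equation*}
for some constant $\theta \in (0,1)$ depending only on the ellipticity ratio $\Lambda/\lambda$ and the dimension. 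This is the standard reduction of oscillation via Harnack on the homogeneous part plus ABP control of the remainder.

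Iterating this inequality on the dyadic sequence $r_k = 2^{-k} r_0$ and using the summability of $\sum_k r_k$ gives $\omega(r_k) \leq C r_k^\alpha$ for some $\alpha = \alpha(\theta) \in (0,1)$, via the standard geometric iteration lemma (e.g.\ Lemma 8.23 in \cite{GT}). Since $x_0 \in \Omega'$ is arbitrary, this bound provides the Hölder seminorm; combined with the normalized $L^\infty$ control it yields the full $\mathcal{C}^\alpha(\Omega')$ estimate asserted in the statement. Undoing the normalization puts both $\sup_\Omega u$ and $\|f\|_{\mathcal{L}^n(\Omega)}$ on the right-hand side, as required.

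The genuine obstacle sits earlier: the Harnack inequality in the non-divergence case with merely measurable coefficients (i.e.\ Theorem \ref{harnack}) is the deep Krylov--Safonov theorem and relies itself on the ABP estimate and on Calderón--Zygmund-type decompositions that produce the weak Harnack inequality for non-negative supersolutions. Once that tool is granted, the only technical subtlety in the proof sketched above is tracking how the ABP correction term behaves under the dyadic iteration so that it contributes a bounded (in fact Hölder) remainder rather than blowing up; this is ensured by the factor $r$ in front of $\|f\|_{\mathcal{L}^n}$, combined with the fact that for $n \geq 1$ one may absorb it into a slightly smaller exponent $\alpha$ if necessary.
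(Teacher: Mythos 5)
The paper offers no proof of this statement at all: it is quoted as a classical interior estimate and attributed to \cite{GT}, so there is nothing internal to compare with. Your sketch is essentially the Krylov--Safonov argument that underlies the cited result: ABP to control the inhomogeneous part, the Harnack inequality to obtain geometric decay of the oscillation of the homogeneous part, and the standard dyadic iteration lemma to convert oscillation decay into a H\"older seminorm bound. In outline this is the right proof.

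One step, however, is a genuine gap as you present it. The decomposition $u = v + w$ on $B_{2r}$ requires solving the Dirichlet problem $Lv=0$ in $B_{2r}$, $v=u$ on $\partial B_{2r}$, for a nondivergence operator whose coefficients are merely measurable and bounded. At this level of regularity there is no $W^{2,p}$ or classical solvability theory (and uniqueness of generalized solutions can even fail), so the homogeneous replacement cannot simply be invoked; note that Theorem \ref{harnack} as stated applies only to solutions of $Lu=0$, so your route genuinely needs this problematic step. The proof in \cite{GT} (and in Caffarelli--Cabr\'{e}) avoids it: one uses the weak Harnack inequality for nonnegative supersolutions \emph{with right-hand side}, applied directly to $\sup_{B_{2r}}u - u$ and $u - \inf_{B_{2r}}u$, each of which satisfies $L(\cdot)=\mp f$; this yields the same inequality $\omega(r)\le\theta\,\omega(2r)+Cr\|f\|_{\mathcal{L}^n(B_{2r})}$ with no auxiliary Dirichlet problem. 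With that substitution the remainder of your argument (normalization, covering, the factor $r$ from ABP guaranteeing summability in the dyadic iteration, Lemma 8.23 of \cite{GT}) goes through and coincides with the classical proof behind the citation.
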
 
\vspace{0.5cm}
\par For $ p > 1 $, we consider the Dirichlet problem (\ref{prob}) and recall known results for it.

\begin{theorem} \label{prop}
	For every $p>1$ the problem (\ref{prob}) admits a unique solution $u_p$ which is radial, i.e, with an abuse of notation, $u_p(x) = u_p(|x|)$ for $\vert x \vert = r \in [0,R]$. Furthermore $u_p$ satisfies:
	
	\begin{enumerate}[i)]
		\item $u_p(0) = \max u_p$
		\item $u_p$ is strictly decreasing
		\item $u_p$ changes concavity only once at a point $y_p$ and is concave around the origin.
	\end{enumerate}
\end{theorem}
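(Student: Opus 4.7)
The plan is to reduce the PDE to an ODE and analyze it directly. The existence of a positive viscosity solution of (\ref{prob}) for every $p>1$ is provided by Quaas and Sirakov \cite{QS}, and its radial symmetry about the origin is obtained by the moving plane method as in \cite{LS}. By Pucci's lemma (Theorem \ref{PL}) the equation can be recast as $Tr(A(x)D^2u)+u^p=0$ with a measurable $(\lambda,\Lambda)$-elliptic matrix $A(x)$, so the interior estimates of Theorem \ref{estimate} together with a bootstrap yield $u\in C^2(\bar B_R)$. Writing $u=u(r)$ with $r=|x|$, the two eigenvalues of $D^2u$ are $u''(r)$ and $u'(r)/r$, so the equation becomes a piecewise ODE whose form depends on their signs.

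For parts (i) and (ii), I would first evaluate the equation at $r=0$: since $u'(0)=0$ by smoothness and the two eigenvalues collapse to $u''(0)$ there, one obtains $-2\lambda u''(0)=u(0)^p$, so $u''(0)=-u(0)^p/(2\lambda)<0$, and therefore $u'(r)<0$ for small $r>0$. If there were a first interior zero $r_1\in(0,R)$ of $u'$, then necessarily $u''(r_1)\geq 0$ and $u'(r_1)/r_1=0$, so $\mathcal{M}_{\lambda,\Lambda}^+(D^2u)(r_1)\geq 0$, contradicting $-\mathcal{M}_{\lambda,\Lambda}^+(D^2u)(r_1)=u(r_1)^p>0$. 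Hence $u'<0$ on $(0,R)$, so $u$ is strictly decreasing and its maximum is attained only at the origin.

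For (iii), evaluating the equation at $r=R$ gives $\mathcal{M}_{\lambda,\Lambda}^+(D^2u)(R)=0$; since $u'(R)<0$, assuming $u''(R)\leq 0$ would make both eigenvalues non-positive with at least one strictly negative, yielding $\mathcal{M}_{\lambda,\Lambda}^+(D^2u)(R)<0$, a contradiction. Thus $u''(R)>0$, and a concavity change must occur on $(0,R)$. To show that it is unique, I would introduce the auxiliary function $h(r)=u'(r)/r+u(r)^p/\lambda$: a direct inspection of the piecewise ODE gives $h\geq 0$ on concave arcs, $h\leq 0$ on convex arcs, and $h=0$ precisely at concavity changes. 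Two consecutive changes $y_0<y_1$ would then force $h'(y_0)\leq 0\leq h'(y_1)$, which at such points reduces to the sign of $pr^2u(r)^{p-1}-\lambda$; tracking this quantity along the intervening convex arc, using the sharp bound on $-u'$ available there, should lead to an incompatibility. Uniqueness of $u_p$ in the radial class then follows from the uniqueness of the Cauchy problem for the ODE shot from $r=0$ with $u'(0)=0$, combined with the scaling invariance $u\mapsto tu$, $r\mapsto t^{(1-p)/2}r$ of the equation, which reduces radial solutions to a one-parameter family indexed by the first zero.

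The main obstacle is this last step, namely the uniqueness of the concavity change, since the ODE changes form precisely at a transition and one must handle one-sided derivatives carefully across it. As a robust alternative, the whole qualitative analysis could be carried out in the phase plane following Felmer and Quaas \cite{FQ}, where the concavity change corresponds to the orbit crossing a fixed nullcline and the geometry of the planar flow makes it transparent that only one such crossing can occur.
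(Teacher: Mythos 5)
Your outline coincides with the paper's proof on every load-bearing point: existence via the a priori estimate/degree scheme of \cite{QS}, radial symmetry from the moving plane result of \cite{LS}, uniqueness from scaling invariance together with uniqueness of the radial Cauchy problem, and the single change of concavity ultimately via the Emden--Fowler/phase-plane analysis of Felmer and Quaas (the paper simply invokes Lemma 3.1 of \cite{FQ1}). Your direct ODE arguments for i), ii) and for $u''(R)>0$ are correct and make a nice elementary substitute for the citation to \cite{LS}; the sign analysis of the auxiliary function $h(r)=u'(r)/r+u(r)^p/\lambda$ on concave and convex arcs, and the computation reducing $h'$ at a concavity change to the sign of $p\,r^2u(r)^{p-1}-\lambda$, are also correct.

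Two caveats. First, the existence statement for \emph{every} $p>1$ does not follow from \cite{QS} alone: that scheme needs a Liouville-type nonexistence result in the whole space to produce the a priori bounds, and the reason it covers all $p\in(1,\infty)$ here is precisely that $\tilde N_+<2$ in dimension two, so the nonexistence theorem of \cite{CL1} applies for every $p$; the paper states this explicitly and you should too. Second, the genuinely new part of your plan, ruling out a second concavity change by tracking $p\,r^2u^{p-1}-\lambda$ along the intervening convex arc, is left at the level of ``should lead to an incompatibility''; that step is exactly the nontrivial content (it amounts to controlling the Emden--Fowler quantity $r^{2/(p-1)}u(r)$ between two sign changes of $h'$), so as written it is a gap. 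Your declared fallback, carrying out the argument in the phase plane as in \cite{FQ}/\cite{FQ1}, is precisely the paper's route (see also the paper's remark that the dynamical system of \cite{MNP} gives the same conclusion), so the proposal is sound provided you either complete the $h$-argument or adopt that citation for iii).
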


\begin{proof}
	The existence of a positive solution of (\ref{prob}) for every $p>1$ derives from the nonexistence of solutions of the analogous equation in $\mathbb{R}^2$ (see \cite{CL1}). Indeed if entire solutions do not exist, then apriori estimates hold which allow to prove the existence of a solution of (\ref{prob}) as in \cite{QS}.
	\par The  radial symmetry of $u_p$ and i)-ii) have been proved in \cite{LS} by the moving plane method. The uniqueness follows by the invariance by scaling of the equation and the uniqueness of the initial value problem for the corresponding ODE.
	\par Finally, iii) can be proved exactly as in \cite[Lemma 3.1]{FQ1} using the Emden-Fowler analysis.    
\end{proof}

\begin{Remark}
	The uniqueness of the radius $y_p$ where $u''(r)=0$ can be obtained also as in \cite{MNP} analyzing the flow induced by an associated dynamical system. This method also works in dimension two, and we will use it to study the problem for $\mathcal{M}_{\lambda,\Lambda}^-.$
\end{Remark}

\subsection{Asymptotic behavior of rescaled solutions}
	In this section we prove Theorem \ref{conv}. 
	
	We are interested in the asymptotic behavior of the solution $u_p$ when the exponent $p$ goes to $+\infty$.

%
%
%

Recalling that  the parameter $\varepsilon$ is defined by

\begin{equation*}
	\varepsilon_p^{-2} = p \cdot u_p(0)^{p-1}
\end{equation*}
we prove the following preliminary result.
\begin{lemma}
	It holds :
	$$\lim\limits_{p\rightarrow +\infty}{\varepsilon_p^{-2}}= +\infty$$
\end{lemma}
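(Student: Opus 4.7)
The plan is to establish the quantitative lower bound
\[
u_p(0)^{p-1} \;\geq\; \frac{4\lambda}{R^2} \qquad \text{for every } p>1,
\]
from which the conclusion follows at once since $\varepsilon_p^{-2} = p\,u_p(0)^{p-1} \geq 4\lambda p/R^2 \to +\infty$.

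First, I would construct an explicit radial quadratic barrier adapted to $\mathcal{M}^+_{\lambda,\Lambda}$. Set
\[
w(r) \;=\; \frac{u_p(0)^p}{4\lambda}\bigl(R^2 - r^2\bigr).
\]
For a $C^2$ radial function in $\mathbb{R}^2$ the Hessian has eigenvalues $w''(r)$ and $w'(r)/r$; both evaluate here to $-u_p(0)^p/(2\lambda)$, which is strictly negative, so the Pucci extremal operator acts with the ``slow'' coefficient and
\[
\mathcal{M}^+_{\lambda,\Lambda}(D^2 w) \;=\; 2\lambda \cdot \Bigl(-\tfrac{u_p(0)^p}{2\lambda}\Bigr) \;=\; -u_p(0)^p.
\]
Hence $w$ solves $-\mathcal{M}^+_{\lambda,\Lambda}(D^2 w) = u_p(0)^p$ in $B_R$ with $w=0$ on $\partial B_R$.

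Next I would apply the comparison principle for $\mathcal{M}^+_{\lambda,\Lambda}$, a standard consequence of the sub-additivity of $\mathcal{M}^+$ and the weak maximum principle. By property i) of Theorem~\ref{prop}, $u_p \leq u_p(0)$ in $B_R$, so
\[
-\mathcal{M}^+_{\lambda,\Lambda}(D^2 u_p) \;=\; u_p^p \;\leq\; u_p(0)^p \;=\; -\mathcal{M}^+_{\lambda,\Lambda}(D^2 w) \qquad \text{in } B_R,
\]
and both functions vanish on $\partial B_R$. The comparison principle then gives $u_p \leq w$ throughout $B_R$. Evaluating at the origin yields $u_p(0) \leq w(0) = u_p(0)^p R^2/(4\lambda)$, and dividing by $u_p(0)>0$ produces the claimed lower bound.

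I do not expect a serious obstacle in this argument: the barrier is elementary, the sign of the eigenvalues of $D^2 w$ is transparent from the quadratic form, and the comparison principle for Pucci operators is classical in the Caffarelli--Cabr\'e theory. The only point to keep in mind is calibrating the constant in $w$ so that $-\mathcal{M}^+_{\lambda,\Lambda}(D^2 w)$ matches the frozen right-hand side $u_p(0)^p$; this is precisely what makes the coefficient $\lambda$ (rather than $\Lambda$) appear in the final bound $4\lambda/R^2$.
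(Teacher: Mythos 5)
Your proof is correct, and it takes a genuinely different route from the paper's. The paper argues through the principal eigenvalue: if $u_p(0)^{p-1}$ were eventually below $\lambda_1(\mathcal{M}^+_{\lambda,\Lambda},B_R)$, then $u_p$ would be a positive function vanishing on $\partial B_R$ with $-\mathcal{M}^+_{\lambda,\Lambda}(D^2u_p)=u_p^p\le u_p(0)^{p-1}u_p<\lambda_1 u_p$, contradicting the maximum-principle characterization of the first eigenvalue; hence $u_p(0)^{p-1}$ stays bounded away from zero and $p\,u_p(0)^{p-1}\to\infty$. You instead compare $u_p$ with the explicit quadratic barrier $w(r)=\frac{u_p(0)^p}{4\lambda}(R^2-r^2)$: the computation $\mathcal{M}^+_{\lambda,\Lambda}(D^2w)=-u_p(0)^p$ is right (in dimension two both Hessian eigenvalues equal $-u_p(0)^p/(2\lambda)<0$, so the coefficient $\lambda$ acts on both), and since $u_p^p\le u_p(0)^p$ the subadditivity of $\mathcal{M}^+$ gives $\mathcal{M}^+_{\lambda,\Lambda}\bigl(D^2(u_p-w)\bigr)\ge \mathcal{M}^+_{\lambda,\Lambda}(D^2u_p)-\mathcal{M}^+_{\lambda,\Lambda}(D^2w)\ge 0$, so the ABP/weak maximum principle applied to $u_p-w$ (which vanishes on $\partial B_R$) yields $u_p\le w$, hence $u_p(0)\le w(0)$ and $u_p(0)^{p-1}\ge 4\lambda/R^2$. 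Your argument is more elementary in that it avoids the theory of principal eigenvalues for Pucci operators, using only the comparison/maximum principle for $\mathcal{M}^+$-subsolutions, and it gives a quantitative lower bound with an explicit constant; the paper's proof is shorter once the eigenvalue theory for $\mathcal{M}^+_{\lambda,\Lambda}$ is taken as known. Both deliver the same conclusion, $\varepsilon_p^{-2}=p\,u_p(0)^{p-1}\ge c\,p\to+\infty$.
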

\begin{proof}
	The proof is based on the fact that $u_p(0)^{p-1}$ does not converge to 0. Assume otherwise, then for $p$ sufficiently big, we have:
	
	\begin{align*}
 		u_p(0)^{p-1} < \lambda_1 (\mathcal{M}_{\lambda,\Lambda}^+, B_R)		
	\end{align*}
	where $\lambda_1(\mathcal{M}_{\lambda,\Lambda}^+, B_R)$ is the first eigenvalue of the Pucci operator on $B_R$ with homogeneous Dirichlet boundary conditions.
	\par In particular, this implies that

\begin{equation*}
\begin{cases} 
-\mathcal{M}_{\lambda,\Lambda}^+ (D^2 u_p)(x) = u_p(x)^p < \lambda_1 (\mathcal{M}_{\lambda,\Lambda}^+, B_R)\cdot u		 \qquad \text{ in } B_R\\
u_p > 0 \qquad \text{ in } B_R  \\
u_p = 0 \qquad \text{ on } \partial B_R  
\end{cases}
\end{equation*}
	
This is a contradiction with the definition of the first eigenvalue. Therefore we conclude that $\lim\limits_{p \rightarrow \infty}p \cdot u_p(0)^{p-1} = \infty$.
	
\end{proof}

\begin{Proposition} \label{convprop}
	
The rescaled function $z_p$ defined in (\ref{rescale}) converges in $C^2_{loc}(\mathbb{R}^2)$, up to a subsequence, to a solution $z$  of the Liouville equation (\ref{liouv}).
\end{Proposition}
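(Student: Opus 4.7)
The plan is to derive the PDE satisfied by $z_p$, obtain a uniform $L^\infty_{loc}$ bound on $z_p$ via the radial ODE, use elliptic regularity to extract a $C^2_{loc}$-convergent subsequence, and pass to the limit in the equation.

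\textbf{Equation for $z_p$.} From the definition of $z_p$, the positive $1$-homogeneity of $\mathcal{M}^+_{\lambda,\Lambda}$, and the identity $\varepsilon_p^{-2}=p\,u_p(0)^{p-1}$, a direct chain-rule computation yields
\[
-\mathcal{M}^+_{\lambda,\Lambda}(D^2 z_p)(x)=\Bigl(1+\frac{z_p(x)}{p}\Bigr)^{p}\qquad\text{in } B_{R/\varepsilon_p}(0),
\]
with $z_p(0)=0$. Theorem~\ref{prop}(i) gives $z_p\leq 0$, and $u_p\geq 0$ forces $z_p\geq -p$, so the right-hand side lies in $[0,1]$.

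\textbf{Uniform bounds on compact sets.} By Theorem~\ref{prop}(iii), $z_p$ is radial and changes concavity exactly once at $r_p^{*}=y_p/\varepsilon_p$. On the concave region $[0,r_p^{*}]$ both eigenvalues of $D^2 z_p$ (namely $z_p''$ and $z_p'/r$) are non-positive, so the equation reduces to
\[
-\lambda\bigl(z_p''+z_p'/r\bigr)=(1+z_p/p)^{p}\leq 1.
\]
Multiplying by $r$ and integrating twice with $z_p(0)=z_p'(0)=0$ gives $|z_p'(r)|\leq r/(2\lambda)$ and $|z_p(r)|\leq r^{2}/(4\lambda)$ on $[0,r_p^{*}]$. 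On the convex region $z_p''\geq 0$ and $z_p'\leq 0$, hence $|z_p'|$ is non-increasing; combining this with the bound at $r_p^{*}$ one obtains $\|z_p\|_{L^\infty(B_\rho)}\leq 3\rho^{2}/(4\lambda)$ for every $\rho>0$, uniformly in $p$.

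\textbf{Regularity and passage to the limit.} By Pucci's Lemma~\ref{PL}, $z_p$ solves a linear equation $-\operatorname{Tr}(A_p(x)D^2 z_p)=f_p$ with $A_p$ uniformly $(\lambda,\Lambda)$-elliptic measurable and $\|f_p\|_\infty\leq 1$. The interior estimates of Theorems~\ref{harnack} and~\ref{estimate}, together with a standard bootstrap, yield uniform $C^{2,\alpha}_{loc}$ bounds on $z_p$. By Arzelà-Ascoli a subsequence converges in $C^{2}_{loc}(\mathbb{R}^{2})$ to a radial $z$; since $z$ is bounded on compact sets, $(1+z_p/p)^{p}\to e^{z}$ uniformly on compacts, and the continuity of $\mathcal{M}^+$ allows us to pass to the limit, obtaining $-\mathcal{M}^+_{\lambda,\Lambda}(D^2 z)=e^{z}$ in $\mathbb{R}^{2}$.

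\textbf{Main obstacle.} The least routine step is the uniform $L^\infty_{loc}$ estimate: because $\mathcal{M}^+$ is fully nonlinear one cannot simply compare $z_p$ with a paraboloid through a maximum principle as in the semilinear case. The argument above circumvents this by working with the one-dimensional radial ODE on each concavity region separately, leveraging the unique-concavity-change property from Theorem~\ref{prop}(iii) and the monotonicity of $z_p'$ in the convex region.
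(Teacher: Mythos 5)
Your proposal is correct, and it reaches the crucial uniform $L^\infty_{loc}$ bound by a genuinely different route than the paper. The paper never uses the radial structure at this stage: after writing the equation in the linearized form $-\operatorname{Tr}(A_p D^2 z_p)=(1+z_p/p)^p$ via Pucci's lemma, it introduces the auxiliary solution $w_p$ of the Dirichlet problem with the same right-hand side on $B_{R_1}$, bounds $w_p$ by the ABP estimate, and then applies the Harnack inequality to the $A_p$-harmonic difference $\varPsi_p=z_p-w_p$ (which is nonpositive and has $\varPsi_p(0)=-w_p(0)$ bounded) to control $\sup|z_p|$ on $B_{R_1/2}$. You instead exploit that $z_p$ is radial, decreasing, and changes concavity exactly once at $y_p/\varepsilon_p$ (Theorem \ref{prop}), so that in the concave region $\mathcal{M}^+$ reduces to $\lambda(z_p''+z_p'/r)$ and a double integration of $-\lambda(rz_p')'\le r$ gives the explicit bound $|z_p(r)|\le r^2/(4\lambda)$, extended past the inflection point by the monotonicity of $|z_p'|$ in the convex region; the resulting bound $3\rho^2/(4\lambda)$ on $B_\rho$ checks out in both cases $y_p/\varepsilon_p\gtrless\rho$ (for $p$ large enough that $B_\rho\subset B_{R/\varepsilon_p}$, which holds since $\varepsilon_p\to0$). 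Your argument is more elementary and quantitative, but leans on the radial symmetry and on part iii) of Theorem \ref{prop}; the paper's Harnack--ABP comparison is symmetry-free and would survive in non-radial situations. One small imprecision in your regularity step: the Harnack inequality of Theorem \ref{harnack} plays no role once you have the $L^\infty$ bound, and a ``standard bootstrap'' on the linear equation with merely measurable coefficients cannot give $C^{2,\alpha}$; as in the paper, you should pass from the Krylov--Safonov $C^\alpha$ estimate of Theorem \ref{estimate} (noting $[(1+z_p/p)^p]_{C^\alpha}\le[z_p]_{C^\alpha}$) to the interior $C^{2,\alpha}$ estimates valid for the concave operator $\mathcal{M}^+_{\lambda,\Lambda}$ (Evans--Krylov/Caffarelli), and then conclude by Arzel\`a--Ascoli exactly as you do.
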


\begin{proof}
	A simple computation gives for $x \in B_{R/\varepsilon_p}:$
	\begin{align*}
		 D^2z_p(x) = \frac{p}{u_p(0)} \cdot \varepsilon_p^2 \cdot D^2u_p(\varepsilon_p \cdot x)
	\end{align*}
	
	Therefore, 
	\begin{align*}
	\mathcal{M}_{\lambda,\Lambda}^+(D^2z_p)(x) = \frac{p}{u_p(0)} \cdot \varepsilon_p^2 \cdot \mathcal{M}_{\lambda,\Lambda}^+(D^2u_p)(\varepsilon_p \cdot x)
	\end{align*}
	
	Since $u_p$ is a solution to Problem (\ref{prob}), we get
	
	\begin{align*}
	\mathcal{M}_{\lambda,\Lambda}^+(D^2z_p)(x) = \frac{-p}{u_p(0)} \cdot \varepsilon_p^2 u_p^p(\varepsilon_p \cdot x)=-\left( 1 +\frac{z_p}{p} \right) ^p 
	\end{align*}
	
	It is clear from the above expression that we are done once we prove that $z_p$ converges up to a subsequence in $C^2_{loc}(\mathbb{R}^2)$. \\

	 It follows from Theorem \ref{PL} that for every $p$, there is a $(\lambda,\Lambda)$ elliptic matrix $A_p(x)$ such that
	
	\begin{align}
		-\mathcal{M}_{\lambda,\Lambda}^+(D^2z_p)(x) = -Tr(A_p(x)D^2z_p(x))=\left( 1 +\frac{z_p(x)}{p} \right) ^p 
	\end{align} 
	
	Let $B_{R_1}$ be the ball of radius $R_1>0$ centered at the origin and consider $w$ the solution of the problem:

\begin{equation}
\begin{cases}
-Tr(A_p(x)D^2w_p(x))=\left( 1 +\frac{z_p(x)}{p} \right) ^p  \qquad \text{ in } B_{R_1}\\
w_p = 0 \qquad \text{ on } \partial B_{R_1}  
\end{cases}
\end{equation}	
	 
It follows from the definition of $z_p$ that $(1+\frac{z_p}{p})$ $\in (0,1]$. Therefore it follows from the Alexandrov-Bakelman-Pucci (\cite{GT}) estimate and maximum principle that $0 \leq w_p \leq C(\lambda,R_1)$.

For $x \in B_{R_1}$ define the auxiliary function $\varPsi_p (x) = z_p(x) - w_p(x)$ which solves the problem

\begin{equation}	
\begin{cases}
-Tr(A_p(x)D^2\varPsi_p(x))= 0  \qquad \text{ in } B_{R_1}\\
\varPsi_p = z_p \qquad \text{ on } \partial B_{R_1}.  
\end{cases}
\end{equation}	

It follows from  Theorem \ref{harnack}, applied to $-\varPsi_p$, that there is a constant $C_1(\Lambda / \lambda,R_1)$ such that

\begin{align*}
	C_1\cdot  \varPsi_p(0) \leq C_1\cdot  \sup_{{ B_{R_1}}} \varPsi_p \leq \inf_{{ B_{R_1/2}}} \varPsi_p.
\end{align*}

Note that $\varPsi_p(0) = z_p(0) - w_p(0) \geq C$. Therefore we conclude that

\begin{align*}
	-\inf_{{ B_{R_1/2}}} \varPsi_p =\sup_{B_{R_1/2}} |\varPsi_p| \leq C(\Lambda,\lambda,R_1).
\end{align*}
	
	Since $w_p$ is also uniformly bounded we obtain:

\begin{align*}
\sup_{B_{R_1/2}} |z_p| \leq C(\Lambda,\lambda,R_1).
\end{align*}
	
	We proceed by using the $\mathcal{C}^\alpha$ estimates (Theorem  \ref{estimate}) to state that there are $\alpha(\lambda,\Lambda)$ and $ C(\Lambda,\lambda,R_1)$ such that 

\begin{align*}
	\sup_{B_{R_1/4}} [z_p]_{{\mathcal{C}^\alpha}} \leq C.
\end{align*}	
	
	Note that 
	
\begin{align*}
\left[1 + \frac{z_p}{p}\right]_{{\mathcal{C}^\alpha}} \leq \left[z_p\right]_{{\mathcal{C}^\alpha}}.
\end{align*}	

Therefore it follows from the $\mathcal{C}^{2,\alpha}$ estimates for the Pucci operator that 

\begin{align*}
\sup_{B_{R_1/8}} |z_p|_{{\mathcal{C}^{2,\alpha}}} \leq C(\Lambda,\lambda,R_1).
\end{align*}	

Thus from the Arzela-Ascoli theorem there is a subsequence $z_{p_k}$ which, for every $\alpha' < \alpha $ locally converges to a limit function z in $\mathcal{C}^{2,\alpha'}$ in $B_{R_2}$, for a sufficiently small $R_2$. Since $R_1$ is arbitrary, the convergence holds locally over the whole plane.
	
\end{proof}

\subsection{The fully nonlinear Liouville equation}

In the previous section, we have shown that, up to a subsequence, the radial functions $z_p$ converge to a function $z$ which solves (\ref{liouv}) which is a fully nonlinear version of the Liouville equation. Now we proceed in describing such a function in particular proving that $z$ changes concavity only once.

\begin{proof}[\textbf{Proof of Theorem \ref{description}}]

The convergence of the rescaled function $z_p$ is just the statement of Proposition \ref{convprop}. Hence the existence of a radial solution $z$ of (\ref{liouv}) which is negative and decreasing is deduced by that. Now we show that the limit function $z$ changes concavity only once. Thus we consider the only radius $y_p$ where $u_p$ changes concavity ( iii) of Theorem \ref{prop} ). 

We consider the three possible cases:

\begin{enumerate}
	\item $\lim\limits_{p\rightarrow \infty} \frac{y_p}{\varepsilon_p} = \infty$.
	\item  $\lim\limits_{p\rightarrow \infty} \frac{y_p}{\varepsilon_p} = 0 $.
	\item  $\lim\limits_{p\rightarrow \infty} \frac{y_p}{\varepsilon_p} = R_0, \qquad 0 < R_0 < \infty$.
\end{enumerate}

Case 1: Since $\lim\limits_{p\rightarrow \infty} \frac{y_p}{\varepsilon_p} = \infty$, the limit equation being satisfied by $z$ is

\begin{equation} \label{case1}
\begin{cases}
	-\lambda \Delta z &= e^z \qquad \text{ in } \mathbb{R}^2 \\
	z(0)&=0,
\end{cases}
\end{equation}

\noindent because $z$ is concave and decreasing on the whole space. From the classification of solution of the Liouville equation by Chen-Li \cite{CL}, we know that the solution of (\ref{case1}) is:

\begin{align}
	z(x) = \log\left( \frac{1}{(1+\frac{1}{8\lambda}|x|^2)^2} \right).
\end{align}

We conclude that Case 1 is not possible since such a solution is not concave in the whole space.\\

Case 2: Since $\lim\limits_{p\rightarrow \infty} \frac{y_p}{\varepsilon_p} = 0$, the limit ODE being satisfied by $z(x) = z(|x|) $ is

\begin{equation}
\begin{cases}
-\Lambda z''(r) - \lambda \frac{z'(r)}{r} = e^{z(r)} \qquad \text{ in } (0,\infty) \\
z(0)=0,
\end{cases}
\end{equation}
\noindent and $z$ is a nonpositive, decreasing and convex function.

First observe that we may rewrite the above equations as

\begin{align*}
	-(r^{\frac{\lambda}{\Lambda}}\varphi'(r))'=\frac{r^{\lambda/\Lambda}}{\Lambda}e^{\varphi(r)} \qquad r \in (0,\infty).
\end{align*}

Integrating between $0$ and $r$ since $z$ is negative so that $e^z \leq 1$ we obtain:

\begin{align*}
r^{\lambda/\Lambda} \left( z'(r) \right)= \int_{0}^{r} \frac{-s^{\lambda / \Lambda}}{\Lambda}e^{z(s)}ds\geq \frac{-1}{\Lambda}r^{1 +\frac{\lambda}{\Lambda}} \frac{\Lambda}{\lambda+ \Lambda}.	
\end{align*}
Dividing by $r^{\lambda/\Lambda}$ we obtain

\begin{align*}
	0 \geq z'(r) \geq \frac{-1}{\lambda+\Lambda}\cdot r,
\end{align*}
and taking the limit as $r$ goes to zero we get $z'(0)=0$. In particular this implies that $z$ is positive around the origin since $z(0)=0,z'(0)=0$ and $z$ is convex. This contradicts the fact that $z$ is negative.\\

Case 3: Since $\lim\limits_{p\rightarrow \infty} \frac{y_p}{\varepsilon_p} = R_0$, the limit equations being satisfied by $z(x) = z(|x|)$ are

\begin{equation}\label{case31}
\begin{cases}
-\lambda z''(r) - \lambda \frac{z'(r)}{r} = e^{z(r)} \qquad \text{ in } (0,R_0) \\
-\Lambda z''(r) - \lambda \frac{z'(r)}{r} = e^{z(r)} \qquad \text{ in } (R_0,\infty) \\
z(0)=0 \\
z''(R_0)=0,
\end{cases}
\end{equation}

and $z$ is a nonpositive, decreasing function, concave in $(0,R_0)$ and convex in $(R_0,\infty)$.

As in the previous case, we may repeat the same procedure by exchanging $\Lambda$ by $\lambda$ and obtain that $z'(0)=0$.

Therefore $z$ satisfies the following initial boundary value problem

\begin{equation}\label{case32}
\begin{cases}
-\lambda z''(r) - \lambda \frac{z'(r)}{r} = e^{z(r)} \qquad \text{ in } (0,R_0) \\
z(0)=0 \\
z'(0)=0
\end{cases}
\end{equation}

Since such a problem admits only one solution it must be given by

\begin{align*}
	z(r) = \log\left( \frac{1}{(1+\frac{1}{8\lambda}r^2)^2} \right)
\end{align*}

Note that the above function changes concavity only once in $\mathbb{R}^+$ at $2\cdot\sqrt{2\cdot\lambda}$. Therefore $R_0=2\cdot\sqrt{2\cdot\lambda}$. In particular, due to the fact that $z$ is $\mathcal{C}_{loc}^2(\mathbb{R}^2)$, we obtain the following:

\begin{itemize}
	\item $z(R_0)=-2\cdot \ln2$
	\item $z'(R_0)= -\frac{1}{\sqrt{2\lambda}}$
\end{itemize}

Then by (\ref{case31}) $z$ satisfies this other initial value problem

\begin{equation} \label{case33}
\begin{cases}
\Lambda z''(r) - \lambda \frac{z'(r)}{r} = e^{z(r)} \qquad \text{ in } (0,R_0) \\
z(R_0)=-2\cdot \ln2 \\
z'(R_0)= -\frac{1}{\sqrt{2\lambda}}
\end{cases}
\end{equation}
There is only one solution to problem (\ref{case33}) and therefore we obtain that $z$ is given by gluing the solutions of the two initial boundary value problems (\ref{case32}) and (\ref{case33}).
\end{proof}

\section{The Problem for $\mathcal{M}^-$}

We consider the problem\\

\begin{equation}
\begin{cases} \label{probmenos}
-\mathcal{M}_{\lambda,\Lambda}^- (D^2 u)(x) = u(x)^p  \qquad \text{ in } \Omega\\
u > 0 \qquad \text{ in } \Omega  \\
\end{cases}
\end{equation} \\

\noindent where $\Omega$ is either $\mathbb{R}^2$ or the ball $B_R$, centered at the origin, with radius $R>0$. In the last case we assume:

\begin{equation}
	u = 0 \qquad \text{ on } \partial \Omega  
\end{equation}



The first step to study the above problem is understanding whether a critical exponent for the existence of solutions of (\ref{probmenos}) can be defined or not.

To this aim we are going to use the approach of \cite{MNP} which involves an auxiliary dynamical system. Thus we start by introducing it together with some preliminary results from \cite{MNP}.

\subsection{Preliminaries on a Dynamical System}

Since we are interested in radial solutions to (\ref{probmenos}) we write $u=u(r) = u(|x|)$ as an expression of $u$ in radial coordinates. The eigenvalues of $D^2 u$ are the simple eigenvalue $u''(r)$ and $\frac{u'(r)}{r}$ which has multiplicity $(N-1)$ (see \cite{FQ}).
\par Thus $u$ satisfies a corresponding ODE from which it is easy to deduce that $u$ is decreasing as long as it is positive, concave in an interval $(0, \tau_0)$ and changes concavity at least once (see \cite{FQ} or \cite{MNP})

Hence (\ref{probmenos}) can be written as:

\begin{equation}\label{ODE}
	u''= M(-\Lambda r^{-1} u' -u^p),
\end{equation}

\noindent where

\begin{equation}
	M(s)= \begin{cases}
		\frac{s}{\Lambda} \qquad \text{ if } s \leq 0 \\
		\frac{s}{\lambda} \qquad \text{ if } s > 0 \\
	\end{cases},
\end{equation}

\par Next we introduce the following auxiliary functions:
\begin{align} \label{variables}
	X(t) = -\frac{r u'(r)}{u(r)}, \qquad Z(t) = - \frac{r u(r)^p}{u'(r)}  \qquad \text{for }t=\ln r
\end{align} 
whenever $ u(r) \neq 0 , u'(r) \neq 0$ \\

Since $\,u > 0\,$ and $\,u'<0\,$, we have that the above quantities belong to the first quadrant of the $(X,Z)$ plane.

In the new variables, the equation (\ref{ODE}) becomes the following autonomous dynamical system

\begin{align}\label{system}
	\left(\dot{X},\dot{Z}\right) = F(X,Z)= \left(f(X,Z),g(X,Z)\right)	
\end{align}

\noindent where the dot $\cdot$ stands for derivation with respect to t and $f,g$ are given by:

\begin{align*}
	f(X,Z) =
	\left\{
	\begin{array}{ll}
		X(X+\frac{Z}{\Lambda})  & \mbox{if } (X,Z) \in R^+ \\
		X(X-(\tilde{N}_{-} -2) +\frac{Z}{\lambda}) & \mbox{if } (X,Z) \in R^-
	\end{array}
	\right.
\end{align*}
\\
\begin{align*}
	\hspace{-1cm}g(X,Z) =
	\left\{
	\begin{array}{ll}
		Z(2-pX-\frac{Z}{\Lambda})  & \mbox{if } (X,Z) \in R^+ \\
		Z(\tilde{N}_{-} -pX-\frac{Z}{\lambda}) & \mbox{if } (X,Z) \in R^-
	\end{array}
	\right.
\end{align*}

\noindent where the regions $R^+$ and $R^-$ are:

\begin{align}
	R^+ &= \{(X,Z) \,|\, Z > \Lambda \} \\
	R^- &= \{(X,Z)  \,|\, 0 < Z < \Lambda \}
\end{align}

Note that whenever $(X(t),Z(t))$ belongs to the line
\begin{equation}
	\ell = \{(X,Z  \,|\, Z=\Lambda)\}
\end{equation}

\noindent then the corresponding solution $u$ of (\ref{ODE}) satisfies $u''=0$. Hence $R^+$ and $R^-$ represent, in terms of the new variables $(X,Z)$, the regions of concavity and convexity of $u$, respectively.
\par Other important sets  which are relevant to study the dynamics induced by (\ref{system}) are: 

\begin{equation}\label{l1}
	\ell_1= \{(X,Z)  \,|\, Z= \lambda(\tilde{N}_{-} -2-X)\} 
\end{equation}

\noindent which is the set where $\dot{X}=0$, and
\begin{equation}
 	\ell_2 = \{(X,Z) \in R^+ \,|\, Z= \Lambda({2} -pX)\} \cup \{(X,Z) \in R^- \,|\, Z= \lambda(\tilde{N}_{-} -pX)\} 
\end{equation}

\noindent which is the set where $\dot{Z}=0$.

Then we need to consider the stationary points for (\ref{system}) and their classification.
\par We recall that for $p \leq \frac{\tilde{N}_-}{\tilde{N}_--2}$ is already known that there exists a positive solution of (\ref{probmenos}) with Dirichlet boundary conditions in the ball $B_R$ (\cite{QS}).
\par Therefore, to determine the critical exponent for $\mathcal{M}_{\lambda,\Lambda}^-$ only the values of $p$ larger than $\frac{\tilde{N}_-}{\tilde{N}_- -2}$ are important, hence, we just state all is needed for $p>\frac{\tilde{N}_-}{\tilde{N}_- -2}$.

\begin{Proposition}
The ODE system (\ref{system}) admits the following stationary points:

\begin{enumerate}[i)]
	\item $N_0=(0,2\Lambda)$ which is a saddle point.
	\item $A_0 = (\tilde{N}_{-} -2,0)$ which is a saddle point.
	\item $M_0=\left(\frac{2}{p-1},\lambda(\tilde{N}_{-} -2-\frac{2}{p-1})\right)$  which is a source for $ \frac{\tilde{N}_{-} }{\tilde{N}_{-} -2} <p < \frac{\tilde{N}_{-} +2}{\tilde{N}_{-} -2}$, a center for $p = \frac{\tilde{N}_{-} +2}{\tilde{N}_{-} -2}$ , a sink for $p > \frac{\tilde{N}_{-} +2}{\tilde{N}_{-} -2}$
	\item $O=(0,0)$ which is a saddle point.  
\end{enumerate}

\end{Proposition}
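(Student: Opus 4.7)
The plan is to locate the stationary points by solving $f(X,Z)=0$ and $g(X,Z)=0$ in each of the two regions $R^+$ and $R^-$, and then to classify each equilibrium by computing the Jacobian $DF$ at that point and reading off the signs of its trace and determinant.

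Finding the equilibria is routine. In $R^+$, the constraints $X\ge 0$ and $Z>\Lambda$ reduce $f=X(X+Z/\Lambda)=0$ to $X=0$, and then $g=Z(2-pX-Z/\Lambda)=0$ yields $Z=2\Lambda$: this is $N_0$. In $R^-$, the zero sets of $f$ and $g$ split as $\{X=0\}\cup\{X=(\tilde N_--2)-Z/\lambda\}$ and $\{Z=0\}\cup\{Z=\lambda(\tilde N_--pX)\}$ respectively; combining the four possibilities produces $O$, $A_0$, $M_0$, plus one spurious candidate $(0,\lambda\tilde N_-)$, which must be discarded because $\lambda\tilde N_-=\Lambda+\lambda>\Lambda$ violates the defining constraint $Z<\Lambda$ of $R^-$. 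The positivity of both coordinates of $M_0$ is equivalent to $p>\tilde N_-/(\tilde N_--2)$, which is precisely the regime under consideration.

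At $O$, $A_0$, and $N_0$ the Jacobian is triangular, so classification is immediate: $O$ has eigenvalues $-(\tilde N_--2)$ and $\tilde N_-$; $A_0$ has eigenvalues $\tilde N_--2>0$ and $\tilde N_--p(\tilde N_--2)<0$, the latter sign using $p>\tilde N_-/(\tilde N_--2)$; and $N_0$ has eigenvalues $2$ and $-2$. Hence all three are saddles. The only real computation is at $M_0$. Using the defining identities $z_0/\lambda=\tilde N_--2-x_0$ and $px_0=\tilde N_-\cdot(px_0/\tilde N_-)$ with $x_0=2/(p-1)$, a short simplification yields
\[
DF(M_0)=\begin{pmatrix} x_0 & x_0/\lambda \\ -pz_0 & -\tilde N_-+px_0 \end{pmatrix},
\]
whence $\det DF(M_0)=2\tilde N_--4p/(p-1)$, which is positive iff $p>\tilde N_-/(\tilde N_--2)$, and $\operatorname{tr}DF(M_0)=x_0(1+p)-\tilde N_- = [2(p+1)-\tilde N_-(p-1)]/(p-1)$, whose sign is that of $(\tilde N_-+2)/(\tilde N_--2)-p$. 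Positive determinant combined with positive, zero, or negative trace then yields source, center, and sink for $M_0$ in the three advertised subranges.

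The main subtlety is purely algebraic: one must force the trace and determinant at $M_0$ to collapse cleanly using the two equilibrium relations. A minor technical remark is that the linear classification at the borderline $p=(\tilde N_-+2)/(\tilde N_--2)$ a priori gives only a weak (linear) center, and calling $M_0$ a genuine center at that exponent rests on the Hamiltonian-type first integral of (\ref{system}) in the spirit of \cite{MNP}; I would cite this rather than reprove it.
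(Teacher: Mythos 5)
Your proposal is correct: the equilibria you find and discard are the right ones, and the Jacobian computations check out (at $M_0$, using the equilibrium relations one indeed gets $\det DF(M_0)=2\tilde{N}_- -\frac{4p}{p-1}$ and $\operatorname{tr}DF(M_0)=\frac{2(p+1)-\tilde{N}_-(p-1)}{p-1}$, giving source/center/sink exactly at the stated thresholds). The paper itself states this Proposition without proof, importing it from the preliminaries of \cite{MNP}, and your linearization argument is precisely the standard one used there; your remark that the borderline case $p=\frac{\tilde{N}_-+2}{\tilde{N}_--2}$ yields only a linear center from the eigenvalue analysis, so that the genuine center property must be taken from \cite{MNP}, is the right way to handle the only nontrivial point.
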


The stationary points and the direction of the vector field $F$ on the relevant sets for (\ref{system}) are displayed in Figure 1.

\subsection{Periodic Orbits}

For the sequel it is important to see for which range of the exponent $p$ there are no periodic orbits of (\ref{system}).

\begin{theorem}\label{porbits}
	
	Let $0<\lambda \leq \Lambda$. Then for the system (\ref{system}) it holds:
	\begin{enumerate}[i)]
		\item no periodic orbits exist if $\,1 < p < \frac{\tilde{N}_{-} +2}{\tilde{N}_{-} -2}.$ 
		\item there exists $\,p_o > \max \{3,\frac{\tilde{N}_{-} +2}{\tilde{N}_{-} -2}\}\,$ such that for $\,p > p_o $ there is no periodic orbit.
	\end{enumerate}	
\end{theorem}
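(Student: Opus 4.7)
The plan is to apply a Bendixson--Dulac-type criterion on the open first quadrant $Q=\{X>0,\,Z>0\}$, which is simply connected. A preliminary check shows that $F=(f,g)$ is continuous across $\ell=\{Z=\Lambda\}$: for $N=2$ we have $\tilde{N}_- - 2=\Lambda/\lambda-1$, and substituting $Z=\Lambda$ into both the $R^+$- and $R^-$-expressions for $f$ and $g$ yields the same values. Two structural observations will be useful. First, since $f(X,Z)=X(X+Z/\Lambda)>0$ throughout $R^+$, the coordinate $X$ is strictly increasing along any trajectory in $R^+$, so no periodic orbit lies entirely in $R^+$. Second, on $\ell$ the sign of $\dot Z=\Lambda(1-pX)$ changes only at $X=1/p$, so any putative periodic orbit crosses $\ell$ upward at some $X<1/p$ and downward at some $X>1/p$; in particular it encircles the unique interior stationary point $M_0$.

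The key computation uses the monomial Dulac ansatz $B(X,Z)=X^\alpha Z^\beta$, which gives
\[
\operatorname{div}(BF)\Big|_{R^\pm}=X^\alpha Z^\beta\,\Phi_\pm(X,Z),
\]
with
\[
\Phi_+=\bigl(\alpha+2-(\beta+1)p\bigr)X+(\alpha-\beta-1)\tfrac{Z}{\Lambda}+2(\beta+1),
\]
\[
\Phi_-=\bigl(\alpha+2-(\beta+1)p\bigr)X+(\alpha-\beta-1)\tfrac{Z}{\lambda}+(\beta-\alpha)\tilde{N}_-+2(\alpha+1).
\]
The coefficients of $X$ and $Z$ agree across $R^\pm$ (up to $\Lambda\leftrightarrow\lambda$ in the $Z$-coefficient); only the constant term differs, and only in $R^-$ does $\tilde{N}_-$ appear. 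The goal is to choose $(\alpha,\beta)$ so that $\Phi_+$ and $\Phi_-$ share one strict sign throughout $Q$. Since $B$ can be taken continuous across $\ell$ and $F$ is also continuous there, the flux contribution along $\ell$ cancels and the usual Bendixson--Dulac argument forbids any closed orbit in $Q$.

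For part (i), I would take $\beta=\alpha-1$, which annihilates the $Z$-coefficient and reduces the problem to the simultaneous inequalities $\alpha+2-\alpha p\ge 0$, $2\alpha>0$, and $2\alpha+2-\tilde{N}_->0$. These hold for $\alpha\in\bigl((\tilde{N}_--2)/2,\,2/(p-1)\bigr]$, an interval which is nonempty precisely when $p<(\tilde{N}_-+2)/(\tilde{N}_--2)$. For such $\alpha$ we get $\Phi_\pm>0$ on $Q$, which closes part (i). For part (ii) a monomial weight alone is insufficient: making $\Phi_\pm\le 0$ forces $\beta\le-1$, after which a direct check shows the constant term in $R^-$ cannot be made nonpositive once $p$ exceeds $\tilde{N}_-/(\tilde{N}_--2)$. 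I would therefore enlarge the class of Dulac weights, for instance to $B=X^\alpha Z^\beta\,\psi(Z)$ with $\psi$ continuous at $Z=\Lambda$, or combine a partial Dulac estimate on an outer region of $Q$ with a local basin-of-attraction argument at $M_0$, using that for $p$ large $M_0$ is a strong sink whose linearized basin (by Hartman--Grobman) grows with $p$; the explicit threshold $p_o$ emerges from matching these two regions, with the condition $p>3$ entering to control the sign of the $X$-coefficient of $\Phi_\pm$.

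The main obstacle is part (ii): the contribution of $\tilde{N}_-$ to the constant term of $\Phi_-$ prevents a single monomial Dulac weight from producing a sign-definite divergence for all large $p$, so a more refined argument combining a generalized Dulac function with the local stability of $M_0$ and the global monotonicity of $X$ on $R^+$-orbits is required. The explicit threshold $p_o>\max\{3,(\tilde{N}_-+2)/(\tilde{N}_--2)\}$ is exactly the value beyond which these ingredients can be assembled coherently to cover all of $Q$.
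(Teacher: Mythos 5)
Your part (i) is correct and is essentially the paper's own argument: the paper uses the same monomial Dulac/Green identity with $\varphi=X^\alpha Z^\beta$, choosing $\beta=\frac{3-p}{p-1}$ and $\alpha=\frac{2}{p-1}$, i.e.\ exactly the endpoint $\beta=\alpha-1$, $\alpha=2/(p-1)$ of your admissible range, which makes $\operatorname{div}(\varphi F)$ a positive constant multiple of $X^\alpha Z^\beta$ on both $R^+$ and $R^-$ precisely when $1<p<\frac{\tilde N_-+2}{\tilde N_--2}$; your observation that the flux terms along $\ell$ cancel and that any closed orbit must meet both regions matches the paper.

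For part (ii), however, your proposal has a genuine gap: you correctly diagnose that sign-definiteness of a monomial Dulac divergence fails for large $p$, but you then only gesture at remedies (a weight $X^\alpha Z^\beta\psi(Z)$, or a basin-of-attraction argument at $M_0$) without carrying either out, and neither is straightforward -- Hartman--Grobman gives no quantitative basin size, and a periodic orbit would lie outside whatever local basin you control, so ``the basin grows with $p$'' is exactly the point that would need proof. The paper closes (ii) with the \emph{same} monomial weight, not a sign argument but a quantitative one: from Green's formula, $0=\int_{R^+\cap D}\Phi+\int_{R^-\cap D}\Phi$ with $\Phi=\frac{4}{p-1}X^\alpha Z^\beta$ on $R^+$ and $\Phi=-\frac{p(\tilde N_--2)-(\tilde N_-+2)}{p-1}X^\alpha Z^\beta$ on $R^-$; since $p>3$ gives $\beta<0$, one may replace $Z^\beta$ by $\Lambda^\beta$ from above on $R^+$ and from below on $R^-$. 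Then the geometry of the flow is used to compare the two remaining integrals: the orbit crosses $\ell$ at exactly two points $(x_1,\Lambda)$, $(x_2,\Lambda)$ (the unique zero of $\dot Z$ on $\ell$ is $X=1/p$), it cannot cross $\{Z=2\Lambda\}$ or $\{X=\tilde N_--2\}$, and $\dot X>0$ in $R^+$ and above $\ell_1$ force $R^+\cap D\subset[x_1,x_2]\times[\Lambda,2\Lambda]$ while $[x_1,x_2]\times[\lambda(\tilde N_--2),\Lambda]\subset R^-\cap D$. Hence $\int_{R^+\cap D}X^\alpha\,dX\,dZ\le \frac{\Lambda}{\Lambda-\lambda(\tilde N_--2)}\int_{R^-\cap D}X^\alpha\,dX\,dZ$ with a constant independent of $p$, and since the positive coefficient $\frac{4}{p-1}$ tends to $0$ while the negative one stays bounded away from $0$, the identity is contradicted for all large $p$ (this also yields the explicit threshold used later for $\tilde p_-$). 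This geometric confinement step, which turns the failed pointwise sign condition into a $p$-independent ratio bound, is the missing idea in your sketch.
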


\begin{proof}
	The statement i) was already proved in Proposition 2.10 of \cite{MNP}. We include the proof here for the reader's convenience.
	\par Note that a periodic orbit must necessarily contain a stationary point in its interior. In our case, the only one which can be in it is the point $M_0$. Moreover by the direction of the vector field $F$ (see (\ref{system})) on the lines $L_1 = \{(X,Z) \, | \, Z=2\Lambda\}$ and $L_2 = \{(X,Z) \, | \, X=\tilde{N}_- - 2\}$ we deduce that no periodic orbit can intersect them. Hence, a periodic orbit must be contained in $[0,\tilde{N}_- - 2] \times [0, 2\Lambda]$. 
	\par Consider $\varphi(X,Z)= X^\alpha Z^\beta$, where $\beta = \frac{3-p}{p-1}$. Set $\Phi(X,Z)= \partial_X(\varphi f)+\partial_Z(\varphi g)$, with $f$ and $g$ as in (\ref{system}), i.e 
	
	\begin{align*}
		\Phi(X,Z) =
		\left\{
		\begin{array}{ll}
		X^\alpha Z^\beta \frac{4}{p-1}  & \mbox{if } (X,Z) \in R^+ \\
		- X^\alpha Z^\beta \frac{p(\tilde{N}_{-} -2)-(\tilde{N}_{-} +2)}{p-1} & \mbox{if } (X,Z) \in R^-
		\end{array}
		\right.
	\end{align*}

	Suppose there is a periodic orbit which encloses a region D. It follows from the Green's area formula that:
	
	\begin{align*}
	0 =\int_{\partial D} \varphi \{  f dX - g dZ\}= \int_{D} \Phi dX dZ = \int_{R^+ \cap D} \Phi dX dZ + \int_{R^- \cap D} \Phi dX dZ,
	\end{align*}
	where the first equality holds since $dX = f dt$ and $dZ = g dt$. \\
	From this it follows that for $ p \neq \frac{\tilde{N}_- +2}{\tilde{N}_- -2} $, any periodic orbit must intersect both regions $R^+,R^-$.\\
	It is clear that, for $p$ satisfying $1 < p < \frac{\tilde{N}_{-} +2}{\tilde{N}_{-} -2},$ $\Phi$ is positive and the above identity is a contradiction. Hence i) holds.
	
	Now assume $p > 3$ (which implies that $\beta < 0$). In the region $R^+$, we have $\Lambda < Z < 2\Lambda$, which implies $Z^\beta < \Lambda^\beta$. Also, in $R^-$, $0 < Z < \Lambda$, hence $-Z^\beta < -\Lambda^\beta$. Hence the following bound holds:
	\begin{multline}\label{intineq}
	0 = \int_{R^+ \cap D} \Phi dX dZ + \int_{R^- \cap D} \Phi dX dZ \leq \\
	\frac{4\Lambda^\beta}{p-1} \int_{R^+ \cap D} X^\alpha dX dZ - \frac{p(\tilde{N}_{-} -2)-(\tilde{N}_{-} +2)}{p-1}\Lambda^\beta \int_{R^- \cap D} X^\alpha dX dZ
	\end{multline}
	
	In order to conclude the proof it is sufficient to check that the ratio of the areas $\vert \frac{R^+ \cap D \vert}{\vert R^- \cap D\vert }$ $\nrightarrow$ $\infty $ as $p$ goes to infinity. Indeed if this holds, taking the limit as $p$ goes to infinity we get a contradiction and the result follows.
	
	\begin{figure}[H]
		\centering
		\includegraphics[width=1.0\linewidth]{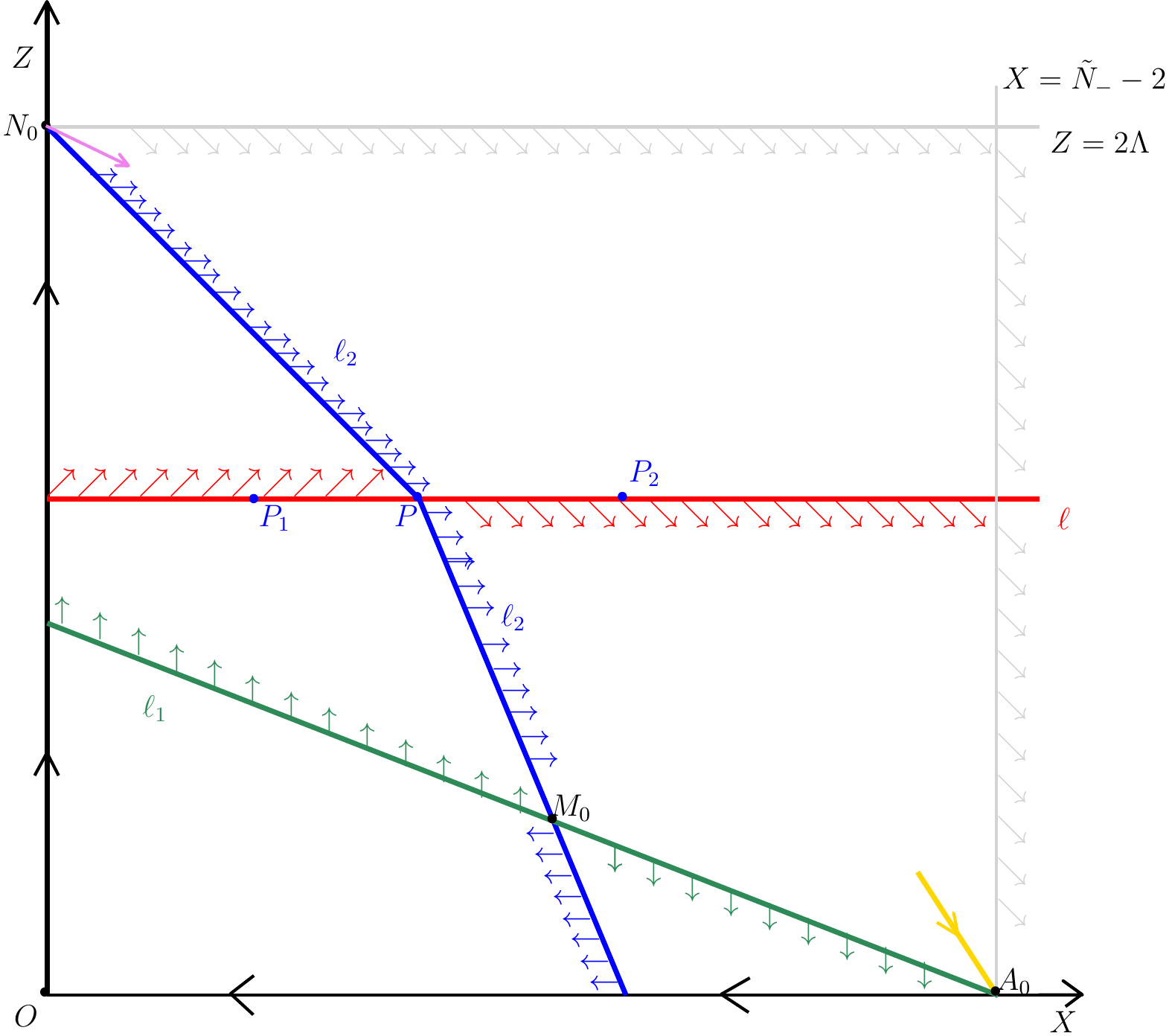}
		\caption{Indicative of the flow for $p > \frac{\tilde{N}_{-} }{\tilde{N}_{-} -2}$}
		\label{fig:capture}
	\end{figure}
	
	Let $P_1=(x_1,\Lambda),P_2=(x_2,\Lambda)$ be  two points where the orbit intersects the concavity line $\ell$, note that since $P=(\frac{1}{p},\Lambda)$ is the only point where $\dot{Z}$ is zero, this implies the uniqueness of $P_1, P_2$. Since $\dot{X} > 0 $ in $R^+$, it follows that $R^+ \cap D$ is contained in the rectangle $Q_1 = [x_1,x_2] \times [\Lambda, 2\Lambda]$. Also since $\dot{X} > 0 $ above the line $\ell_1$ defined in (\ref{l1}) the rectangle $Q_2 = [x_1,x_2] \times [\lambda(\tilde{N}_{-} -2),\Lambda] $ is contained in $R^- \cap D$.
	
	Therefore
	
	\begin{align*}
	\frac{\vert R^+ \cap D \vert}{\vert R^- \cap D \vert} \leq \frac {(x_2-x_1) \Lambda}{(x_2-x_1) (\Lambda - \lambda(\tilde{N}_{-} -2))} = \frac{\Lambda}{\Lambda - \lambda(\tilde{N}_{-} -2)}. 
	\end{align*}
	
	Therefore the right hand side cannot converge to $+\infty$ as $p \rightarrow \infty $, so the existence of $p_o$ satisfying ii) is guaranteed.
		
\end{proof}


 \begin{equation}\label{pmenos}  \text{Define } \tilde{p}_{-} = \inf \left\{ p \in (\frac{\tilde{N}_{-} +2}{\tilde{N}_{-} -2},\infty) \;|\; \nexists \text{ periodic orbits } \forall p' > p \right\}
 \end{equation}

\begin{theorem}[Bound for $\tilde{p}_{-}$]\label{boundpmenos}
	It holds: $$\frac{\tilde{N}_- +2}{\tilde{N}_- -2}\leq\tilde{p}_{-} \leq  \frac{\tilde{N}_{-} +2}{\tilde{N}_{-} -2} + \frac{4}{\tilde{N}_{-} -2+\frac{\lambda}{\Lambda}(\tilde{N}_{-} -2)^2}$$.
\end{theorem}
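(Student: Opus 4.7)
The lower bound $\frac{\tilde{N}_-+2}{\tilde{N}_--2}\leq\tilde p_-$ is immediate from the definition in (\ref{pmenos}), whose infimum is taken over $p>\frac{\tilde{N}_-+2}{\tilde{N}_--2}$.

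For the upper bound, the plan is to refine the Bendixson--Dulac argument from the proof of Theorem \ref{porbits}, ii), extracting an explicit threshold in place of the mere existence of $p_o$. Assume for contradiction that a periodic orbit $\gamma=\partial D$ of the system (\ref{system}) exists at some $p>3$, let $P_1=(x_1,\Lambda),\,P_2=(x_2,\Lambda)$ with $x_1<x_2$ be its intersections with $\ell$, and recall that the Dulac function $\varphi(X,Z)=X^\alpha Z^\beta$ with $\alpha=\frac{2}{p-1}$, $\beta=\frac{3-p}{p-1}$ (so $\beta+1=\alpha>0$) produces via Green's theorem the identity
\[
4\int_{R^+\cap D}X^\alpha Z^\beta\,dX\,dZ \;=\; \kappa\int_{R^-\cap D}X^\alpha Z^\beta\,dX\,dZ,\qquad \kappa:=p(\tilde{N}_--2)-(\tilde{N}_-+2).
\]

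The refinement consists in coupling this identity with sharper geometric information than the crude area comparison used in Theorem \ref{porbits}, ii). The inclusions $R^+\cap D\subseteq Q_1:=[x_1,x_2]\times[\Lambda,2\Lambda]$ and $Q_2:=[x_1,x_2]\times[\lambda(\tilde{N}_--2),\Lambda]\subseteq R^-\cap D$ obtained there are still available; the idea is to enlarge $Q_2$ to a trapezoidal region $T_2\subseteq R^-\cap D$ adapted to the nullcline $\ell_1$ (using that $\dot X>0$ above $\ell_1$), and then to evaluate $\int_{T_2}X^\alpha Z^\beta\,dX\,dZ$ and $\int_{Q_1}X^\alpha Z^\beta\,dX\,dZ$ \emph{exactly} through the antiderivative $Z^\beta\mapsto Z^\alpha/\alpha$ rather than the pointwise sign bound $Z^\beta\lessgtr\Lambda^\beta$. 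After the common factor $\int_{x_1}^{x_2}X^\alpha\,dX$ cancels and one exploits the algebraic constraint $\beta+1=\alpha$, this should yield
\[
\kappa \;\leq\; \frac{4}{1+\tfrac{\lambda}{\Lambda}(\tilde{N}_--2)},
\]
which rearranges exactly to the upper bound for $\tilde p_-$ asserted in (\ref{expbound}), and no periodic orbit can then exist for $p$ strictly above that threshold.

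The main obstacle I foresee is justifying the trapezoidal inclusion $T_2\subseteq R^-\cap D$ throughout its full extent: the proof of Theorem \ref{porbits} only verifies the rectangular version $Q_2\subseteq R^-\cap D$, and pushing the inclusion down below the horizontal height $\lambda(\tilde{N}_--2)$ requires tracking the orbit carefully along $\ell_1$, where the sign information on $\dot X$ constrains but does not immediately imply that the enclosed region fills the full trapezoid. Once this geometric extension is in place, the exact integration on $T_2$ supplies precisely the extra contribution $\lambda(\tilde{N}_--2)/\Lambda$ needed to turn the weaker denominator $\Lambda-\lambda(\tilde{N}_--2)$ coming from the rectangular estimate into the sharper $\Lambda+\lambda(\tilde{N}_--2)$ demanded by the stated bound, and the proof concludes.
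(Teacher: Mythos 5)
Your lower bound is fine (it is immediate from the definition (\ref{pmenos})), and your starting identity $4\int_{R^+\cap D}X^\alpha Z^\beta\,dXdZ=\kappa\int_{R^-\cap D}X^\alpha Z^\beta\,dXdZ$ is the correct Green--Dulac identity. The gap is in the upper bound: the decisive inequality $\kappa\le 4/\bigl(1+\tfrac{\lambda}{\Lambda}(\tilde{N}_--2)\bigr)$ is only announced (``this should yield''), and it does not follow from the ingredients you list. Concretely, on the trapezoid $T_2$ the $Z$--limits depend on $X$, so the factor $\int_{x_1}^{x_2}X^\alpha dX$ does not cancel; what the exact antiderivative actually gives is
\begin{equation*}
\kappa\;\le\;4\,\frac{\int_{x_1}^{x_2}X^\alpha\bigl[(2\Lambda)^\alpha-\Lambda^\alpha\bigr]dX}{\int_{x_1}^{x_2}X^\alpha\bigl[\Lambda^\alpha-(\lambda(\tilde{N}_--2-X))^\alpha\bigr]dX},
\end{equation*}
a bound depending on $x_1,x_2$, about which one only knows $x_1<\frac1p<x_2$. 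Nothing prevents $x_2$ from being of order $1/p$; in that regime $T_2$ is essentially the rectangle $Q_2$ and the bound degenerates to $\kappa\le 4(2^\alpha-1)/\bigl(1-(\lambda(\tilde{N}_--2)/\Lambda)^\alpha\bigr)$. For $\Lambda=2\lambda$ this equals $4\cdot2^\alpha>4$, whereas the bound you need to reach is $4\Lambda/(\Lambda+\lambda(\tilde{N}_--2))=8/3$; so no contradiction is produced for $p$ just above the claimed threshold, and the stated constant is not obtained. (A smaller point: for $\lambda/\Lambda$ small the claimed threshold drops below $3$, so you cannot restrict to $p>3$ without comment; exact integration does not need $\beta<0$, but this must be said.) Ironically, the step you flag as the main obstacle, $T_2\subseteq R^-\cap D$, is the salvageable one: since $\dot X>0$ in $R^-$ above $\ell_1$, a crossing/winding analysis shows the $R^-$ arc of the orbit meets $\ell_1$ only twice, at abscissas $\ge x_2$ and $\le x_1$, so inside the strip $x_1<X<x_2$ the orbit lies strictly below $\ell_1$ and the open trapezoid contains no boundary point of $D$. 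The real gap is quantitative, not geometric: to reach the plus-sign constant you would need genuinely new control (for instance a lower bound on $x_2$, or a comparison region exploiting the part of $D\cap R^-$ to the right of $x_2$), not merely exact integration in $Z$.

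For comparison, the paper's own proof of Theorem \ref{boundpmenos} follows exactly the route you describe but stops earlier: it uses the pointwise bounds $Z^\beta\le\Lambda^\beta$ on $R^+$, $Z^\beta\ge\Lambda^\beta$ on $R^-$ together with the rectangles $Q_1,Q_2$ from Theorem \ref{porbits}, and what that computation delivers is $\kappa\le 4\Lambda/(\Lambda-\lambda(\tilde{N}_--2))$, i.e. the threshold $\frac{\tilde{N}_-+2}{\tilde{N}_--2}+\frac{4}{\tilde{N}_--2-\frac{\lambda}{\Lambda}(\tilde{N}_--2)^2}$, with a \emph{minus} sign in the denominator --- which is what the printed proof literally asserts, and which does not match the plus sign appearing in the statement and in (\ref{expbound}). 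So the sharpening you are attempting is precisely what is missing from the paper's argument as well, and your outline does not supply it; as written, your proposal proves (after fixing the cancellation issue by reverting to $Q_2$) only the weaker minus-sign bound that the paper's proof establishes.
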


\begin{proof}
	If $ p > 3 $ and a periodic orbit exists, then, going back to (\ref{intineq}), it holds: 
	\begin{align*}
			0  \leq 
		\frac{4\Lambda^\beta}{p-1} \int_{R^+ \cap D} X^\alpha dX dZ - \frac{p(\tilde{N}_{-} -2)-(\tilde{N}_{-} +2)}{p-1}\Lambda^\beta \int_{R^- \cap D} X^\alpha dX dZ
	\end{align*}
	
	Considering the rectangles $Q_1$ and $Q_2$ defined in the proof of Theorem \ref{porbits}, we have:  
	
	\begin{multline*}
		\int_{R^+ \cap D} X^\alpha dX dZ \leq \int_{Q_1} X^\alpha dX dZ \leq  \frac{\Lambda}{\Lambda - \lambda(\tilde{N}_{-} -2)}\int_{Q_2} X^\alpha dX dZ \leq \\ \frac{\Lambda}{\Lambda - \lambda(\tilde{N}_{-} -2)}\int_{R^- \cap D} X^\alpha dX dZ
	\end{multline*}
	
 Therefore 
 \begin{align*}
 	0 \leq \left( \frac{\Lambda}{\Lambda - \lambda(\tilde{N}_{-} -2)}\cdot \frac{4\Lambda^\beta}{p-1}   - \frac{p(\tilde{N}_{-} -2)-(\tilde{N}_{-} +2)}{p-1}\Lambda^\beta \right) \int_{R^- \cap D} X^\alpha dX dZ 
 \end{align*}
 
 Observing that the term in brackets is negative whenever $p > \frac{\tilde{N}_{-} +2}{\tilde{N}_{-} -2} + \frac{4}{\tilde{N}_{-} -2-\frac{\lambda}{\Lambda}(\tilde{N}_{-} -2)^2}$, which is greater than 3, we get (\ref{boundpmenos}). 
 
\end{proof}

\subsection{Critical exponent}

We recall that for an orbit $\tau$ of the dynamical system (\ref{system}) the set of limit points of $\tau(t)$, as $t \rightarrow -\infty$, is usually called $\alpha$-limit and denoted by $\alpha(\tau)$. Analogously it is defined the $\omega$-limit $\omega(\tau)$ at $+\infty$.
With the same proofs as in \cite{MNP} we have

\begin{lemma}\label{box}
	For every $p > 1$, any regular solution of (\ref{ODE}) satisfying  the initial conditions: $u_p(0)= \gamma > 0 \, , \, u_p'(0 )= 0$, corresponds to the unique trajectory $\Gamma_p$ of (\ref{system}) whose $\alpha$-limit is the stationary point $N_0$. Moreover:
	\begin{enumerate}[i)]
		\item if $u_p = u_p(r)$ is positive $\forall r \in (0,+\infty)$ then $\Gamma_p$ is bounded and remains in the rectangle $(0,\tilde{N}_- -2) \times (0,2\Lambda)$, for all time.
		\item if there exists $R_p$ such that $u_p(R_p) = 0$ and $u_p >0$ in $(0,R_p)$, then the corresponding trajectory $\Gamma_p=(X_p,Z_p)$ blows up in finite time $T$, in particular:
		
		\begin{equation*}
			\lim\limits_{t\rightarrow T} X_p(t) = +\infty \qquad \lim\limits_{t\rightarrow T} Z_p(t) = 0
		\end{equation*}
	\noindent and there exists $t_1 < T$ such that $X_p(t_1) = \tilde{N}_- -2$.
	\end{enumerate}
\end{lemma}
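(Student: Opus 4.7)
The strategy is to turn the regular data $u_p(0)=\gamma$, $u_p'(0)=0$ into prescribed behavior of the orbit $\Gamma_p$ at $t=-\infty$, and then to use the sign of the vector field (\ref{system}) on the boundary of the rectangle $Q:=(0,\tilde{N}_- -2)\times(0,2\Lambda)$ to split the two alternatives for $u_p$.

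First, I would identify $\alpha(\Gamma_p)$. Regularity of $u_p$ at the origin together with $u_p'(0)=0$ gives $u_p'(r)=u_p''(0)r+o(r)$, and since $D^2 u_p(0)=u_p''(0)\,I$ has both eigenvalues equal, evaluating $-\mathcal{M}^-_{\lambda,\Lambda}(D^2 u_p)=u_p^p$ at $r=0$ yields $u_p''(0)=-\gamma^p/(2\Lambda)$. Substituting into (\ref{variables}) I obtain
\begin{equation*}
X_p(t)=\frac{\gamma^{p-1} r^2}{2\Lambda}(1+o(1))\to 0,\qquad Z_p(t)\to 2\Lambda
\end{equation*}
as $t=\log r\to-\infty$, so $\alpha(\Gamma_p)=N_0$. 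Uniqueness of $\Gamma_p$ then comes from the saddle character of $N_0$: its one-dimensional unstable manifold has exactly one branch entering the open first quadrant.

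Next I would read off the direction of the vector field on $\partial Q$. Direct substitution into (\ref{system}) shows that $\dot Z=-2\Lambda p X<0$ on the top edge $\{Z=2\Lambda,\,X>0\}$ and $\dot X>0$ on the right edge $\{X=\tilde{N}_- -2,\,Z>0\}$ in both subregions $R^\pm$, while the two coordinate axes are invariant because $f$ and $g$ contain the explicit factors $X$ and $Z$. Hence $\Gamma_p$ can leave $Q$ only through the right edge. For part $(ii)$, the hypothesis $u_p(R_p)=0$ with $u_p>0$ on $(0,R_p)$, combined with the Hopf-type property for radial Pucci solutions (as in \cite{MNP}), forces $u_p'(R_p)<0$; then (\ref{variables}) gives directly $X_p(t)\to+\infty$ and $Z_p(t)\to 0$ as $t\to T:=\log R_p$. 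Continuity of $X_p$ on $(-\infty,T)$, with endpoint values $0$ and $+\infty$, produces the claimed $t_1<T$ with $X_p(t_1)=\tilde{N}_- -2$.

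For part $(i)$, suppose $u_p>0$ on all of $(0,+\infty)$. Then $\Gamma_p$ is globally defined and $X_p$ stays finite, since $X_p=-ru_p'/u_p$ diverges only where $u_p$ vanishes. By the boundary analysis it suffices to exclude the right edge as an exit. I would do this with a Riccati bound: in $R^+$ one has $\dot X_p\geq X_p^2$, while in $R^-$ one has $\dot X_p\geq X_p(X_p-(\tilde{N}_- -2))$, so once $X_p\geq 2(\tilde{N}_- -2)$ both regions yield $\dot X_p\geq X_p^2/2$; such a differential inequality forces $X_p$ to blow up in finite $t$, contradicting the finiteness of $X_p$. Hence $\Gamma_p\subset Q$, which proves $(i)$. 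The main obstacle I anticipate is making the Riccati bound uniform as the orbit alternates between $R^+$ and $R^-$ after the crossing, so that $X_p$ reliably reaches the regime $X_p\geq 2(\tilde{N}_- -2)$ where the blow-up mechanism kicks in.
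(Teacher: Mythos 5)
Your proposal is essentially correct, and it is more than the paper itself offers: the paper proves Lemma \ref{box} by citation only (``see Propositions 2.1, 3.6 and 3.9 of \cite{MNP}''), whereas you reconstruct a self-contained two-dimensional phase-plane argument in the same spirit. Your three ingredients match what is needed: the expansion at the origin (indeed $\mathcal{M}^-_{\lambda,\Lambda}(D^2u_p)(0)=2\Lambda u_p''(0)=-\gamma^p$, giving $X_p\to 0$, $Z_p\to 2\Lambda$, hence $\alpha(\Gamma_p)=N_0$, with uniqueness from the unstable manifold of the hyperbolic saddle), the sign of $F$ on $\partial\bigl((0,\tilde{N}_--2)\times(0,2\Lambda)\bigr)$, and the Riccati comparison. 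Two remarks. First, the ``main obstacle'' you flag is not an obstacle: in \emph{both} regions $R^\pm$ one has $\dot X\geq X\bigl(X-(\tilde{N}_--2)\bigr)$ (in $R^+$ because $X+Z/\Lambda\geq X\geq X-(\tilde{N}_--2)$), and since $\dot X>0$ on the edge $\{X=\tilde{N}_--2,\,Z>0\}$ and throughout the strip $\{X>\tilde{N}_--2,\,Z>0\}$, at any time $t_2$ after the crossing one has $X(t_2)=\tilde{N}_--2+\delta$ with $\delta>0$; comparison with $\dot Y=Y\bigl(Y-(\tilde{N}_--2)\bigr)$, $Y(t_2)>\tilde{N}_--2$, already gives blow-up of $X$ in finite time, which contradicts the finiteness of $X_p(t)=-ru_p'(r)/u_p(r)$ for every $t$ when $u_p>0$ on $(0,\infty)$; no uniform threshold like $X\geq 2(\tilde{N}_--2)$ has to be reached beforehand. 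Second, a few small points should be made explicit to close the sketch: $u_p'<0$ wherever $u_p>0$ (so $(X_p,Z_p)$ really lies in the open first quadrant and $Z_p$ is defined), the unstable eigenvector of $N_0$ is proportional to $(2,-p\Lambda)$, so the orbit leaves $N_0$ into $\{Z<2\Lambda\}$ and, by the sign of $\dot Z$ on $\{Z=2\Lambda\}$, stays there, which is what confines $\Gamma_p$ to the rectangle in case i); on the top edge only the $R^+$ expression of $g$ is relevant (the edge lies in $\overline{R^+}$); and in case ii) the strict inequality $u_p'(R_p)<0$ follows from backward uniqueness for the ODE (Hopf), after which your limits $X_p\to+\infty$, $Z_p\to 0$ as $t\to T=\log R_p$ and the intermediate-value argument for $t_1$ are exactly right.
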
 

\begin{proof}
	See Proposition 2.1, Proposition 3.6 and Proposition 3.9 of \cite{MNP}. 
\end{proof}

We prove now a crucial result about nonexistence of solutions for the problem (\ref{probmenos}) in the ball. It is the keypoint to define and estimate the critical exponent. 

\begin{theorem} \label{soluball}
	For $ p > \tilde{p}_-$, defined in (\ref{pmenos}), there are no solutions of (\ref{probmenos}) in the ball.
\end{theorem}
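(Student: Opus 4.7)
The plan is to argue by contradiction using the phase-plane analysis of the autonomous system (\ref{system}). Suppose that for some $p > \tilde{p}_-$ there does exist a positive solution $u_p$ of (\ref{probmenos}) in some ball $B_R$. By Lemma \ref{box}(ii), the associated trajectory $\Gamma_p$ has $\alpha$-limit $N_0$, blows up in finite time $T$, and satisfies $X_p(t) \to +\infty$, $Z_p(t) \to 0$ as $t \to T^-$; in particular $\omega(\Gamma_p) \neq A_0$.

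The heart of the argument is to study the stable manifold of the saddle $A_0 = (\tilde{N}_- - 2, 0)$. Since $p > \tilde{p}_- \geq \frac{\tilde{N}_-}{\tilde{N}_- - 2}$, a linearization at $A_0$ confirms it is a non-degenerate saddle, and so the branch of its stable manifold entering the interior of the first quadrant singles out a unique trajectory $\tau$ with $\omega(\tau) = A_0$. I would then follow $\tau$ backward in time: a direction-of-flow analysis on the boundary of the rectangle $[0,\tilde{N}_--2]\times[0,2\Lambda]$, analogous to the one behind Lemma \ref{box}(i), shows that $\tau$ stays confined to this rectangle in backward time and is therefore bounded. Hence Poincar\'e--Bendixson applies and $\alpha(\tau)$ is either a stationary point of (\ref{system}) or a periodic orbit.

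The rest of the proof is a case analysis ruling out every option for $\alpha(\tau)$. A periodic orbit is excluded by the definition of $\tilde{p}_-$ together with $p > \tilde{p}_-$. The point $M_0$ is excluded because $p > \tilde{p}_- \geq \frac{\tilde{N}_-+2}{\tilde{N}_--2}$ makes $M_0$ a sink, from which no nontrivial trajectory can emanate backward. The points $O$ and $A_0$ themselves are excluded by their positions on the axes together with the fact that $\tau$ lies on the stable manifold of the hyperbolic saddle $A_0$, and hence cannot be a homoclinic loop at $A_0$ nor accumulate at $O$ coming from the interior. The only remaining possibility is $\alpha(\tau) = N_0$; but then the uniqueness part of Lemma \ref{box} forces $\tau = \Gamma_p$, giving $\omega(\Gamma_p) = A_0$ and contradicting the finite-time blow-up established at the start.

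The step I expect to be the most delicate is establishing boundedness of the backward orbit $\tau$: one has to check the sign of the vector field on each side of the invariant rectangle, in particular on the segment $\{X = \tilde{N}_- - 2\} \cap R^-$ where the flow points outward, and argue that the specific stable-manifold branch entering $A_0$ from inside the rectangle cannot exit through that side when traced backward (this is ultimately dictated by the eigenvector direction of the stable eigenvalue at $A_0$). A secondary but minor point of care is the precise exclusion of $\alpha(\tau) = A_0$ and $\alpha(\tau) = O$, which should follow from the linearizations at those saddles combined with the direction along which $\tau$ arrives at $A_0$.
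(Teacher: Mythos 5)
Your overall strategy is the same as the paper's (backtrack the stable--manifold trajectory $\tau$ of the saddle $A_0$, invoke Poincar\'e--Bendixson, and exclude every possible $\alpha$-limit, with the case $\alpha(\tau)=N_0$ forcing $\tau=\Gamma_p$ and contradicting the finite-time blow-up), but the confinement step, which you yourself flag as delicate, does not work as you set it up, and the side you worry about is not the one that leaks. On the segment $\{X=\tilde N_- -2\}$ one has $\dot X>0$ in both $R^+$ and $R^-$, so the forward flow exits through it and the backward flow can only enter: that side is harmless for backward confinement. The genuine problem is the top side $Z=2\Lambda$: there $\dot Z=Z(2-pX-Z/\Lambda)=-2\Lambda pX<0$ for $X>0$, so the forward flow crosses it downward, into the rectangle, and hence the backward flow crosses it upward, out of the rectangle. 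The rectangle $[0,\tilde N_- -2]\times[0,2\Lambda]$ is therefore not backward invariant, and no unconditional confinement of the backward orbit of $\tau$ in that rectangle can be proved; indeed, when $p>\tilde p_-$ and no ball solution exists, the backward continuation of the stable manifold of $A_0$ is exactly the orbit one expects to escape to infinity through large $Z$, so the boundedness you need cannot hold independently of the contradiction hypothesis.

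The paper closes this gap by making essential use of the hypothetical blow-up orbit $\Gamma_p$ itself as part of the barrier: under the assumption that a ball solution exists, $\Gamma_p$ runs from $N_0=(0,2\Lambda)$ across to the line $L_1=\{X=\tilde N_- -2\}$ before blowing up (Lemma \ref{box} ii)), and the region enclosed by the $Z$-axis, the arc of $\Gamma_p$, the line $L_1$ and the $X$-axis is a bounded set from which orbits can only exit forward through $L_1$; $\tau$ cannot cross the axes (invariant), cannot cross $\Gamma_p$ (uniqueness of trajectories), and cannot leave backward through $L_1$ by the sign of $\dot X$ there, while the stable eigendirection at $A_0$ places $\tau$ inside this region. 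Only then is the backward orbit bounded and Poincar\'e--Bendixson applicable. So to repair your argument you must replace the fixed rectangle by this $\Gamma_p$-dependent trapping region; with that change the remaining case analysis you give (no periodic orbits by definition of $\tilde p_-$, $M_0$ a sink, $N_0$ ruled out by uniqueness of $\Gamma_p$ and its blow-up, the degenerate options at $O$ and $A_0$ ruled out by the saddle structure and the invariant axes) coincides with the paper's proof.
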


\begin{proof}
	We recall that $ \tilde{p}_- \geq \frac{\tilde{N}_- +2}{\tilde{N}_-2}.$ Assume that there is a solution of (\ref{probmenos}) in the ball for some $ p > \tilde{p}_-$. Then we would have a trajectory $\Gamma_p = \Gamma_p(t) = (X_p(t),Z_p(t))$ starting from $N_0$ such that $\Gamma_p$ blows up in finite time, after intersecting the line $L_1 = \{(X,Z) \, | \, X = \tilde{N}_- -2\}$ (see Lemma \ref{box}). On the other hand, if we consider the unique trajectory $\tau_p$ whose $\omega$-limit is $A_0$ (which is a saddle point for $p > \frac{\tilde{N}_-}{\tilde{N}_- -2})$ and backtrack it we should be in either one of the following cases:
	\begin{enumerate}[a)]
		\item $\alpha(\tau_p)$ is a stationary point or a periodic orbit around $M_0$.
		\item $\tau_p$ blows up backward in finite time.
	\end{enumerate}

\noindent The case a) is not possible. Indeed $M_0$ is a sink and no periodic orbit exists for $ p > \tilde{p}_-$, by definition (\ref{pmenos}). Moreover $N_0$ cannot be the $\alpha$-limit of $\tau_p$ because $N_0$ is a nondegenerate saddle point, so that, only the trajectory $\Gamma_p$ starts from $N_0$ and $\omega(\Gamma_p)$ is not $A_0$ since $\Gamma_p$ blows up in finite time.\\
Also case b) is not possible because $\tau_p$ is necessarily bounded because it stays in the regions enclosed by the $X$ axis, $Z$ axis, the orbit $\Gamma_p$ and the line $L_1$ from which any orbit can only exit in forward time (by the direction of the vector field on $L_1$), see Figure \ref{fig:barrier}.

	\begin{figure}[H]
	\centering
	\includegraphics[width=0.7\linewidth]{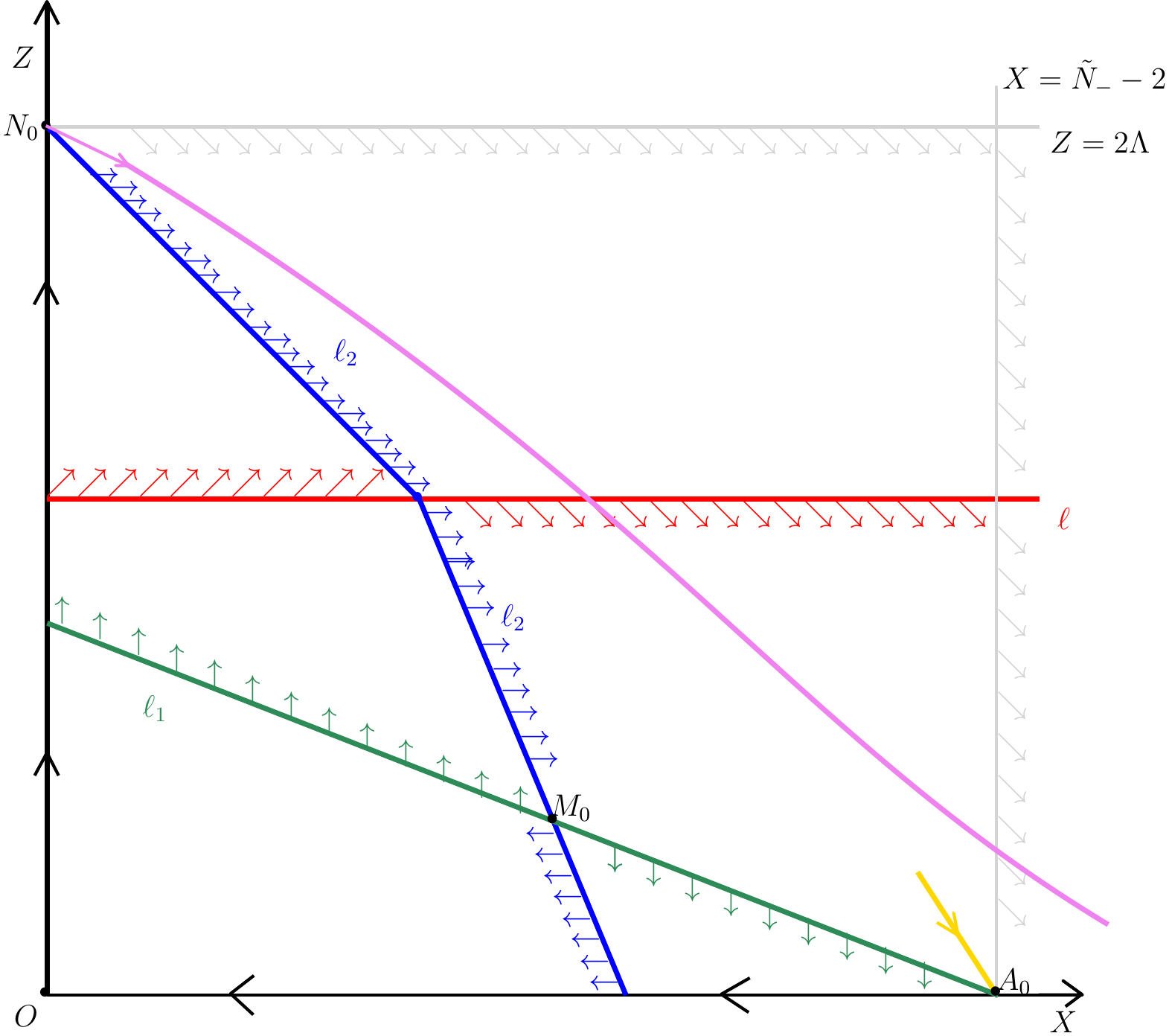}
	\caption{}
	\label{fig:barrier}
\end{figure}
 
 Thus we have a contradiction which implies that $\Gamma_p$ cannot correspond to a solution in the ball.
 
\end{proof}

Then we consider the set:
\begin{equation}
\mathcal{C} = \{ p > 1 \, | \, \text{ a solution of (\ref{probmenos}) in  $B_R$ exists.}\}	
\end{equation}

\noindent and we observe that $\mathcal{C}$ is nonempty since we have already remarked that for $ p \in (1, \frac{\tilde{N}_-}{\tilde{N}_- -2}]$ (\ref{probmenos}) has a solution in $B_R$.
\par So as in \cite{MNP} we define:

\begin{equation}
	p_-^* \,=\, \sup \mathcal{C}\, <\, \infty
\end{equation}

\noindent and call it the critical exponent for $\mathcal{M}_{\lambda,\Lambda}^-$. \\
Before proving Theorem \ref{criticalexp} we recall the following definitions.

\begin{Definition}
	Let $u_p$ be a radial solution of (\ref{probmenos}) in $\mathbb{R}^2$ and $\alpha= \frac{2}{p-1}$. Then $u_p$ is said to be:
	
	\begin{enumerate}[i)]
		\item fast decaying if there exists $c > 0$ such that $\lim\limits_{r\rightarrow \infty} r^{\tilde{N}_- -2} u(r) = c$.
		\item slow decaying if there exists $c > 0$ such that $\lim\limits_{r\rightarrow \infty} r^{\alpha} u(r) = c$.
		\item pseudo slow decaying if there exist constants $0 < c_1 < c_2 $ such that $$c_1 = \liminf\limits_{r\rightarrow \infty} r^{\alpha} u(r) < \limsup\limits_{r\rightarrow \infty} r^{\alpha} u(r) = c_2 $$.
	\end{enumerate}
\end{Definition}

\noindent As shown in \cite{MNP} in terms of the dynamical system (\ref{system}), a fast decaying, slow decaying or pseudo-slow decaying solution corresponds to an orbit $\Gamma_p$ such that $\omega(\Gamma_p)=$ $A_0$, or $M_0$ or a periodic orbit around $M_0$ respectively. 

All the proprieties of the number $p_-^*$ stated in Theorem \ref{criticalexp} can be proved in the same way as in \cite{MNP}. We give some indications for the reader's convenience.

\begin{proof} [Proof of Theorem \ref{criticalexp}]
	It is not difficult to see that $p_-^+ \geq \frac{\tilde{N}_- +2}{\tilde{N}_- -2}$ because we already know that for $ p \leq \frac{\tilde{N}_- }{\tilde{N}_- -2}$ a solution of (\ref{probmenos}) in $B_R$ exists. Moreover for $p \in (\frac{\tilde{N}_- }{\tilde{N}_- -2}, \frac{\tilde{N}_- +2}{\tilde{N}_- -2})$, $M_0$ is a source, so if the trajectory $\Gamma_p$ starting from $N_0$ does not blow up in finite time, producing so a solution in the ball, $\omega(\Gamma_p)$ should be the stationary point $A_0$, because no periodic orbits exist, by Theorem \ref{porbits}. However $\omega(\Gamma_p)$ cannot be $A_0$, otherwise, the region bounded by $\Gamma_p$ and the $X$ and $Z$ axis would be an invariant region containing the point $M_0$ and the trajectories starting at $M_0$ would have no limit points to reach in forward time.
	\par The proof that $p^*_- \neq \frac{\tilde{N}_- +2}{\tilde{N}_- -2}$ is more delicate and can be carried out as in \cite{MNP} (see Theorem 5.2 and Theorem 4.7 therein). The fact that $p_-^* \leq \tilde{p}_-$ comes from Theorem \ref{soluball}, therefore using Theorem \ref{boundpmenos} we complete the estimate (\ref{expbound}).  
	\par The proprieties iv) and v) derive from the definitions of $\tilde{p}_-$ and $p_- ^*$. In particular we stress that for $p > \tilde{p}_-$ there are no periodic orbits so the only possibility is that the regular trajectory $\Gamma_p$ starting at $N_0$ converges to $M_0$ which means that the corresponding solution $u_p$ of (\ref{ODE}) is slow decaying.
	\par Finally i), ii) and iii) come from the proprieties of the critical exponent $p_-^*$, which can be proved exactly as in \cite{MNP}, see Theorem 5.2, Theorem 4.7, Proposition 4.2 and Corollary 4.4.
\end{proof}
\printbibliography

\end{document}